\numberwithin{equation}{section}
\numberwithin{figure}{section}
\theoremstyle{plain}
\newtheorem{thm}{\protect\theoremname}[section]
\theoremstyle{plain}
\newtheorem{cor}[thm]{\protect\corollaryname}
\theoremstyle{plain}
\newtheorem{prop}[thm]{\protect\propositionname}
\theoremstyle{plain}
\newtheorem{lem}[thm]{\protect\lemmaname}
\theoremstyle{remark}
\newtheorem{rem}[thm]{\protect\remarkname}
\theoremstyle{remark}
\newtheorem*{rem*}{\protect\remarkname}
\theoremstyle{remark}
\newtheorem*{acknowledgement*}{\protect\acknowledgementname}
\newcommand{\N}{\mathbb{N}}
\newcounter{mythmcounter}
\newtheorem{mythm}{\bf Theorem}[mythmcounter]
\newcommand{\1}{{\large\mathbb{1}}}
\providecommand{\acknowledgementname}{Acknowledgement}
\providecommand{\corollaryname}{Corollary}
\providecommand{\lemmaname}{Lemma}
\providecommand{\propositionname}{Proposition}
\providecommand{\remarkname}{Remark}
\providecommand{\theoremname}{Theorem}
\begin{document}
\title[Fefferman-Stein inequalities for the maximal function on the $k$-ary
tree]{Fefferman-Stein inequalities for the Hardy{-}Littlewood maximal function
on the infinite {rooted} $k$-ary tree}
\author{Sheldy Ombrosi}
\author{Israel P. Rivera-Ríos}
\author{Martín D. Safe}
\address{Departamento de Matemática, Universidad Nacional del Sur (UNS), Bahía
Blanca, Argentina and INMABB, Universidad Nacional del Sur (UNS)-CONICET,
Bahía Blanca, Argentina.}
\email{sombrosi@uns.edu.ar (Sheldy Ombrosi) }
\email{israel.rivera@uns.edu.ar (Israel P. Rivera-Ríos)}
\email{msafe@uns.edu.ar (Martín D. Safe)}
\thanks{The first and the second author were partially supported by grant
PIP (CONICET) 11220130100329CO. The third author was partially supported
by ANPCyT PICT 2017-13152 and UNS Grant PGI 24/L115}
\begin{abstract}
In this paper weighted endpoint estimates for the Hardy-Littlewood
maximal function on {the infinite rooted} $k$-ary tree are provided. Motivated
by Naor and Tao \cite{NT} the following Fefferman-Stein estimate
\[
w\left(\left\{ x\in T\,:\,Mf(x)>\lambda\right\} \right)\leq c_{s}\frac{1}{\lambda}\int_{T}|f(x)|M(w^{s})(x)^{\frac{1}{s}}dx\qquad s>1
\]
is settled and moreover it {is shown it} is sharp, in the sense that it does not
hold in general if $s=1$. Some examples of non trivial weights such
that the weighted weak type $(1,1)$ estimate holds are provided.
A {strong} Fefferman-Stein type estimate and as a consequence some
vector valued extensions are obtained. In the Appendix a weighted
counterpart of the abstract {theorem} of Soria and Tradacete on infinite
trees \cite{STInf} is established.
\end{abstract}

\maketitle

\section{Intoduction and main results}

The centered Hardy{-}Littlewood maximal function on $\mathbb{R}^{d}$
is defined as 
\[
Mf(x)=\sup_{r>0}\frac{1}{|B(x,r)|}\int_{B(x,r)}|f(y)|dy.
\]
Due to fact that the Lebesgue measure is doubling, namely, {since} 
\[
|B(x,2r)|\leq2^{d}|B(x,r)|
\]
it is not hard to check that $Mf(x)\simeq M^{u}f(x)$ where 
\[
M^{u}f(x)=\sup_{x\in B}\frac{1}{|B|}\int_{B}|f(y)|dy
\]
and $B$ is any ball. Furthermore we may replace balls by cubes with
their sides parallel to the {axes}. 

This operator was shown to be bounded on $L^{p}$ and of weak type
$(1,1)$ by Hardy and Littlewood \cite{HL1930} in the case $d=1$
and by Wiener \cite{W1939} for the case $d\geq1$. In a pioneering
work by Fefferman and Stein \cite{FS1971} the following two weights
inequality was provided
\begin{equation}
w\left(\left\{ x\in\mathbb{R}^{d}\,:\,Mf(x)>t\right\} \right)\lesssim_{d}\frac{1}{t}\int_{\mathbb{R}^{d}}|f(x)|Mw(x)dx.\label{eq:FS}
\end{equation}
Inequality (\ref{eq:FS}) is important for several reasons. {The first}
of them is that it was a cornerstone to provide vector valued extensions.
Another fundamental reason is that it was a precursor of the theory
of weights that was continued later {by} the seminal work by Muckenhoupt~\cite{M1972}. We recall that in the classical setting $w\in A_{1}$
if $\left\Vert \frac{Mw}{w}\right\Vert _{L^{\infty}}<\infty$. Since,
in general, $w\leq Mw$, this condition implies that actually $w\simeq Mw$.
Note that if $w\in A_{1}$ from (\ref{eq:FS}) it readily follows
that 
\[
w\left(\left\{ x\in\mathbb{R}^{d}\,:\,Mf(x)>t\right\} \right)\lesssim_{d}\frac{1}{t}\int_{\mathbb{R}^{d}}|f(x)|w(x)dx.
\]
At this point we would also like to note that Fefferman and Stein
noted in \cite{FS1971} that $w\in A_{1}$ is a necessary condition
for this inequality to hold. Since those works, the theory of weights
and more in particular Fefferman-Stein inequalities and related variants
have been studied in a variety of contexts \cite{OP,To,ABN,R19} and
for singular integrals \cite{CF1976,P1994,MR2923171,DSLR2016,CLO2017,LiPRR}
and their commutators \cite{P1997,PP2001,LORR}. See also \cite{LP,B2,CPSS}.

The Hardy-Littlewood maximal operator in metric measure spaces has
been mainly studied in the doubling setting (see \cite{H2001}). In
the case of non-doubling spaces, results for a suitable modification
of the maximal operator were provided in \cite{NTV,Sa2005,S2015}.
It is worth mentioning that since Bourgain's seminal work \cite{Bou}
a number of papers{,} such as \cite{SW,MSW,AK2010,CH}{,} have been devoted
to the study of discrete versions of operators in harmonic analysis.

In \cite{NT} Naor and Tao {study} the connection between the doubling
condition and the maximal function in metric measure spaces. They
provide a deep localization theorem for the maximal function and introduce
the $n$-microdoubling property and use it to provide some interesting
consequences. Among them they recover the classical result by Strömberg
and Stein \cite{SS} $\|M\|_{L^{1}\rightarrow L^{1,\infty}}\lesssim n\log(n)$
in the general context of metric spaces satisfying the aforementioned
$n$-microdubling property.

Having in mind results such as the Strömberg and Stein bound mentioned
above, one may tend to think that there is always a connection between
the doubling condition of the space and the weak type $(1,1)$ of
the maximal function. However Naor and Tao show, in some sense, that
it is not the case. They provide an example, the infinite {rooted} $k$-ary
tree, for which even in complete absence of the doubling condition,
the weak-type $(1,1)$ for the centered maximal function {holds} (see
Theorem \ref{ThmTaoNaor} a few lines below). 

Given $k\geq2$ we will {denote by} $T_{k}$ the {infinite rooted} $k$-ary tree, namely,
the infinite {rooted} tree such that each vertex has $k$ children. We shall
drop $k$ and write just $T$ in case there {is} no place to confusion. {Abusing of notation, we will also use $T$ to denote its vertex set.}
It is possible to define a metric measure space $(T,d,\mu)$ where
$d$ is the usual tree metric, namely $d(x,y)$ is the number of edges
of the unique path between $x$ and $y$, and $\mu$ is the counting
measure defined on parts of the set of vertices. Abusing of notation,
given $A\subset T$ we shall denote $|A|=\mu(A)$ and
\[
\int_{A}f(x)dx=\sum_{x\in A}f(x).
\]
We will also denote 
\[
M^{\circ}f(x)=\sup_{r{\geq}0}\frac{1}{|S(x,r)|}\int_{S(x,r)}|f(y)|dy
\]
where $S(x,r)=\{y\in T:d(x,y)=r\}$ denotes the sphere with center
$x$ and radius $r$. Note that, in contrast with the standard {Euclidean}
setting, {here} it makes sense to consider this kind of maximal function
{because} $S(x,r)$ are not sets of measure $0$. For $k\geq2$, we have
that $M^{\circ}\simeq M$ as we will show in Proposition \ref{prop:MMcirc}. 

In the {infinite rooted} $k$-ary tree setting, covering arguments are essentially unavailable
since the doubling condition or even more generally the upper doubling
condition on the measure introduced by Hytönen in \cite{HPub} completely
{fail}. Hence a different approach {is} required. Via a combinatorial
argument, exploiting the ``expander'' {or ``non-amenability''} properties
of the {infinite rooted $k$-ary} tree, Naor and Tao managed to settle the following {theorem}.

\begin{mythm}\label{ThmTaoNaor} {If }$k\geq2$, then
\[
\left|\left\{ x\in T_{k}\,:\,M^{\circ}f(x)>\lambda\right\} \right|\leq\frac{c}{\lambda}\int_{T}|f(x)|dx
\]
with $c$ independent of $k$.

\end{mythm}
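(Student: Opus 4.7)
The plan is to establish the weak-type $(1,1)$ inequality via a linearization-plus-double-counting argument, using the exponential growth of spheres in the rooted $k$-ary tree to compensate for the complete absence of a covering lemma. After reducing to $f \geq 0$ of finite support and to the maximal function truncated at scales $0 \leq r \leq N$ (with bounds independent of $N$), for each $x \in E_{\lambda} := \{M^{\circ}f > \lambda\}$ I would fix a witness radius $r(x)$ such that
\[
\sum_{y \in S(x,r(x))} f(y) \;>\; \lambda\,|S(x,r(x))|.
\]
Summing over $x \in E_{\lambda}$ and swapping the order of summation then gives
\[
|E_{\lambda}| \;\leq\; \frac{1}{\lambda} \sum_{y \in T} f(y)\,\Phi(y), \qquad \Phi(y) := \sum_{\substack{x \in E_{\lambda} \\ d(x,y)=r(x)}} \frac{1}{|S(x,r(x))|},
\]
and the whole problem is reduced to the pointwise estimate $\Phi(y) \leq C$ with $C$ independent of $k$.

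For the sphere sizes, a direct count on the rooted $k$-ary tree shows that if $x$ has depth $n$ and $1 \leq r \leq n$, then $|S(x,r)| = k^{r} + k^{r-1}$ (splitting over the depth of the lowest common ancestor with the target vertex), while $|S(x,r)| \leq k^{r}+k^{r-1}$ in all cases. By symmetry $|\{x : d(x,y)=r\}| = |S(y,r)|$ obeys the same bound. Plugging these into $\Phi(y)$ gives a contribution of at most $1$ per scale, so the naive sum over $r$ diverges and an additional selection step is unavoidable.

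The heart of the argument is therefore a stopping/selection procedure applied to the family $\{(x,r(x))\}_{x \in E_{\lambda}}$. My plan is to order the pairs by decreasing scale and greedily retain a subfamily so that, for each $y$, the retained spheres containing $y$ have radii that are separated and whose sizes grow geometrically. Here the rooted tree structure enters decisively: the radii $r$ with $y \in S(x,r(x))$ correspond to choices of a lowest common ancestor of $x$ and $y$ along the (finite) geodesic $[x,y]$, and the $k^{r-1}(k+1)$ sphere-size formula converts separation of radii into a geometric series with ratio at most $1/k$. Summing this geometric series produces a constant bounded by $k/(k-1) \leq 2$, which is uniform in $k \geq 2$.

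The main obstacle is engineering the selection so that the retained spheres still cover $E_{\lambda}$ (or cover it up to a controlled multiplier) while remaining sparse enough that $\Phi(y) = O(1)$. The difficulty is that a single vertex $y$ can lie in witness spheres of wildly different scales with no \emph{a priori} doubling relation between them, so the selection cannot be performed locally: it must track, for every ancestor $a$ of $y$, the radii of retained spheres that ``turn around'' at $a$, and discard all but a geometrically sparse subset of them. Correctly implementing this global stopping-time — and verifying that no mass of $E_{\lambda}$ is lost in the process — is the crux of the proof, and is also where the non-amenability (``expander'') property of the infinite rooted $k$-ary tree is really used.
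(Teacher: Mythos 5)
There is a genuine gap. Your proposal correctly reduces the weak $(1,1)$ estimate to the linearized dual form $|E_\lambda|\le\lambda^{-1}\sum_{y}f(y)\Phi(y)$ with $\Phi(y)=\sum_{x\in E_\lambda,\ d(x,y)=r(x)}|S(x,r(x))|^{-1}$, and your sphere-size computations on the rooted $k$-ary tree are correct. But you then declare that the ``heart of the argument'' is a greedy selection/stopping-time procedure that simultaneously (a) thins the witnesses so that $\Phi(y)=O(1)$ and (b) does not discard a nonnegligible fraction of $E_\lambda$, and you do not construct such a procedure --- you explicitly flag it as ``the crux'' left open. In fact I do not think such a Vitali-type selection can be made to work here. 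The reason is exactly the catastrophic failure of doubling that makes the problem interesting: a single vertex $y$ can lie on $|S(y,r)|\simeq k^{r}$ witness spheres at scale $r$, all with the same radius $r(x)=r$, and any selection that leaves $O(1)$ of them per $(y,r)$ must throw away on the order of $k^{r}$ points of $E_\lambda$. In the Euclidean setting one compensates for the discards by engulfing them in a bounded dilate of a selected ball, using $|B(x,3r)|\lesssim|B(x,r)|$; here $|B(x,3r)|\simeq k^{3r}\gg k^{r}$, so the dilation loss is exponentially large and the covering bookkeeping does not close. Your own observation that ``a single vertex $y$ can lie in witness spheres of wildly different scales with no a priori doubling relation between them'' is precisely why this route is blocked, not merely awkward.

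The paper's proof (following Naor--Tao) avoids any covering or selection argument altogether. Instead of fixing one witness radius per point, it keeps all radii and all levels, decomposes $f$ into its dyadic level sets $E_n=\{2^{n-1}\le f<2^{n}\}$, and replaces the covering step by a purely combinatorial ``edge-counting'' lemma: splitting by tree generations and balancing two elementary bounds (at most $k^{m}$ points of $E_j$ at distance $r$ from a given point of $F_i$, and at most $k^{r-m}$ points of $F_i$ at distance $r$ from a given point of $E_j$) yields an estimate of the shape $\#\{(x,y)\in E\times F : d(x,y)=r\}\lesssim k^{r\theta}\,|E|^{\theta}|F|^{1-\theta}$. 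Feeding this into the distributional estimate for the spherical average $A_r^\circ$ produces the crucial decay factor $(2^{n}/k^{r})^{\gamma}$, which makes the sum over both $r$ and $n$ converge and gives the weak $(1,1)$ bound with a constant independent of $k$. This is the mechanism that actually exploits the ``expander'' structure of the tree, and it is qualitatively different from the stopping-time you sketch. To salvage your approach you would have to replace the selection by a genuinely new idea; as written, the proposal is a plan around the key difficulty rather than a proof.
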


It is worth noting that this result can be deduced from a work of
Rochberg and Taibleson~\cite{RT}, and that it was also established
independently by Cowling, Meda, and Setti \cite{CMS}. For $p>1$
the strong type estimate was essentially settled by Nevo and Stein
\cite{NS}. 

At this point we would like to mention works by Soria and Tradacete
\cite{ST,STInf} in which they study the connection between properties
of the maximal function {and} properties of {the} underlying graphs. Furthermore
\cite[Theorem 4.1]{STInf} is an abstract version of Theorem \ref{ThmTaoNaor}.

The main purpose of this work is to get a variant of the Fefferman-Stein
estimate for the Hardy-Littlewood maximal function on the {infinite rooted} $k$-ary
tree, generalizing Theorem \ref{ThmTaoNaor}.  Most of Fefferman-Stein
inequalities in a number of settings rely upon a suitable use of covering
lemmas such as Calderón-Zygmund decomposition. In the {infinite rooted} $k$-ary tree
setting, no regularity nor doubling condition is available, and hence
other techniques are required. We will exploit the flexibility in
the approach provided in \cite{NT} to obtain the following {theorem}.
\begin{thm}
\label{Thm:FS}Let $k\geq2$ and $s>1$. Then, for every weight $w\geq0$
on $T$ we have that 
\[
w\left(\left\{ x\in T\,:\,Mf(x)>\lambda\right\} \right)\leq c_{s}\frac{1}{\lambda}\int_{T}|f(x)|M_{s}w(x)dx
\]
where $M_{s}w=M(w^{s})^{\frac{1}{s}}${,} $c_{s}$ is independent
of $k${,} and $c_{s}\rightarrow+\infty$ when $s\rightarrow1$. 
\end{thm}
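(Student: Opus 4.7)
The plan is to follow the general strategy Naor and Tao \cite{NT} used to prove the unweighted weak-type $(1,1)$ bound (Theorem~\ref{ThmTaoNaor}), injecting the weight $w$ and the parameter $s>1$ at a suitable stage. First, by Proposition~\ref{prop:MMcirc} the pointwise equivalence $M\simeq M^{\circ}$ reduces the statement to the same estimate with the centered sphere maximal function $M^{\circ}$ in place of $M$. One then linearizes: for each $x$ in the level set $E_{\lambda}=\{M^{\circ}f>\lambda\}$ one selects a radius $r_{x}\geq 0$ for which
$$\lambda < \frac{1}{|S(x,r_{x})|}\int_{S(x,r_{x})}|f(y)|\,dy,$$
and, after a standard truncation reducing $E_{\lambda}$ to a finite set, multiplies by $w(x)$, sums over $x\in E_{\lambda}$, and exchanges the order of summation. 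This produces
$$\lambda\,w(E_{\lambda})\leq \int_{T}|f(y)|\,\Phi(y)\,dy,\qquad \Phi(y):=\sum_{\substack{x\in E_{\lambda}\\ d(x,y)=r_{x}}}\frac{w(x)}{|S(x,r_{x})|}.$$

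The decisive step is then the pointwise bound
$$\Phi(y) \leq c_{s}\,M_{s}w(y)\qquad (y\in T),$$
which substitutes for the Vitali covering step in the Euclidean proof. To establish it we group the sum according to the radius and split
$$\frac{w(x)}{|S(x,r)|}=\frac{w(x)}{|S(x,r)|^{1/s}}\cdot\frac{1}{|S(x,r)|^{1/s'}},$$
applying H\"older's inequality with exponents $s$ and $s'$. The ``expander'' or non-amenability property of $T_{k}$---the fact that $|S(x,r)|$ and $|S(y,r)|$ are both comparable to $k^{r}$---allows the first factor to be absorbed into a sphere average of $w^{s}$ controlled by $M(w^{s})(y)=M_{s}w(y)^{s}$, while the second factor is handled by the combinatorial input imported from \cite{NT}. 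The H\"older exponent $s'<\infty$ is precisely what yields a constant $c_{s}$ that is finite for $s>1$ and diverges as $s\to 1$, matching the sharpness of the theorem.

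The principal obstacle is controlling the sum over the radii $r$. A naive term-by-term H\"older estimate produces, at each scale $r$, a quantity comparable to $M_{s}w(y)$; since a single annulus $S(y,r)$ can contain up to $\simeq k^{r}$ competing centres $x$ with $r_{x}=r$, summing over $r$ then diverges. One therefore has to use the full strength of the Naor--Tao combinatorial construction rather than a mere multiplicity count: the radii $r_{x}$ must be chosen so that the induced family of spheres enjoys a sparseness property across scales, after which the $k^{-r/s}$ decay gained from the H\"older split renders the series over $r$ geometrically summable with a bound independent of $k$. Once $\Phi\leq c_{s}M_{s}w$ is in hand, integrating against $|f|$ yields the theorem.
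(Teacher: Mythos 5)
Your reduction to $M^{\circ}$ via Proposition~\ref{prop:MMcirc} matches the paper, and the linearization identity
\[
\lambda\,w(E_{\lambda})\le\int_{T}|f(y)|\,\Phi(y)\,dy,\qquad
\Phi(y)=\sum_{\substack{x\in E_{\lambda}\\ d(x,y)=r_{x}}}\frac{w(x)}{|S(x,r_{x})|},
\]
is correct. But the decisive claim, the pointwise bound $\Phi(y)\le c_{s}M_{s}w(y)$, is where the argument breaks down, and this is a genuine gap rather than a suppressed routine step. The bound is simply false in general, even in the unweighted case $w\equiv1$ where $M_{s}w\equiv1$. Take $f=Nk^{j}\chi_{T^{j}}$ with $N=k^{2M}$. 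For $x\in T^{j+2m}$ with $0\le m\le M$ the only sphere on which the average of $f$ exceeds $1$ is $S(x,2m)$, so $r_{x}=2m$ is forced. For $y\in T^{j}$ and each $m\le M$, every one of the $k^{2m}$ descendants $x$ of $y$ at depth $j+2m$ satisfies $d(x,y)=2m=r_{x}$ and contributes $\frac{1}{|S(x,2m)|}\simeq k^{-2m}$ to $\Phi(y)$, giving $\Phi(y)\simeq M\to\infty$. No sparse re-selection of radii can help here because the radii are already uniquely determined by $f$; and there is no covering lemma on $T_{k}$ that would let you discard most of $E_{\lambda}$ and still dominate $w(E_{\lambda})$. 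So the step you identify as ``the principal obstacle'' cannot be closed by invoking an unproved sparseness of the $r_{x}$, and your dual operator $\Phi$ genuinely over-counts by an unbounded factor.

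This also reveals a misreading of the combinatorial input in \cite{NT} (and in this paper): the Naor--Tao scheme is \emph{not} a linearization with a judicious choice of radii. Neither \cite{NT} nor the present proof ever associates a radius to each point of $E_{\lambda}$. Instead the paper decomposes $|f|$ dyadically into level sets $E_{n}=\{2^{n-1}\le|f|<2^{n}\}$, estimates each annular average $A_{r}^{\circ}$ separately, and imports the ``expander'' property through Lemma~\ref{lem:Borders}, a weighted bipartite counting estimate on
$\1\otimes w\left(\{(x,y)\in E\times F:\ d(x,y)=r\}\right)$.
Feeding that lemma into a level-set argument yields the distributional bound of Lemma~\ref{lem:sumLevels} with the crucial geometric factor $\bigl(2^{n}/k^{r}\bigr)^{1/(2s')}$; the sum over $r\ge0$ of $w(\{A_{r}^{\circ}f\ge\lambda\})$ then converges precisely because of this factor. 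The role of $s>1$ in the paper is to produce, inside Lemma~\ref{lem:Borders}, a power $k^{r\frac{s'}{s'+1}}$ strictly smaller than $k^{r}$, which is what survives as the decay $(2^{n}/k^{r})^{1/(2s')}$. Your H\"older split $\frac{w(x)}{|S(x,r)|}=\frac{w(x)}{|S(x,r)|^{1/s}}\cdot\frac{1}{|S(x,r)|^{1/s'}}$ does inject $s$ in the right spirit, but it is applied inside the wrong object: applied to $\Phi$ at fixed $y$ it cannot produce summable decay in $r$, because there is nothing to favor small over large radii once the radii are fixed by $f$. To repair the proof you would need to abandon the pointwise estimate on $\Phi$ and replace it with the distributional level-set argument, which is exactly what the paper does.
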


At first sight, having in mind the estimate in the classical setting,
one may wonder {whether} this estimate could be improved to match (\ref{eq:FS}).
However, this is not the case. Not only {it} is not possible to choose
$s=1$ but actually {it} is not even possible to choose any number of iterations
of the maximal function for the inequality to hold.
\begin{thm}
\label{thm:NotFS}Let $n\geq1$. There exists a weight $w\sim Mw$
and a sequence $f_{j}\in L^{1}(M^{n}w)$ such that 
\[
w\left(\left\{ Mf_{j}(x)>1\right\} \right)\geq c_{j}\int_{T}|f_{j}{(x)}|M^{n}w{(x)dx}
\]
where $M^{n}=M\circ\stackrel{n\text{ times}}{{\cdots}}\circ M$ and $c_{j}\rightarrow\infty$
when $j\rightarrow\infty$. 
\end{thm}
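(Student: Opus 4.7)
The plan is to exhibit a single weight $w$ on $T$ with $Mw \sim w$ (hence $M^{n}w \sim w$ for every $n\geq 1$) together with a sequence $f_{j}=c\,\chi_{S_{j}}$ concentrated on the spheres around the root $o$, designed so that $\{Mf_{j}>1\}$ contains the whole ball $B(o,j)$ while $\int f_{j}\,M^{n}w\,dx$ stays bounded. Since $w$ will decay geometrically with depth, the $w$-mass of $B(o,j)$ grows linearly in $j$, forcing $c_{j}\to\infty$.

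My candidate is $w(x)=k^{-|x|}$, where $|x|$ is the distance from $x$ to $o$. The main technical step is to prove the sphere-average identity
\[
\sum_{y\in S(x,\rho)}k^{-|y|}\;=\;k^{-|x|}\,|S(x,\rho)|\qquad\text{for every }\rho\leq|x|.
\]
I would establish this by splitting $S(x,\rho)$ according to the highest common ancestor $a_{i}$ of $y$ and $x$: level $i=0$ contributes the $k^{\rho}$ descendants of $x$ at distance $\rho$, each level $1\leq i\leq\rho-1$ contributes $(k-1)k^{\rho-1-i}$ cousins of $a_{i}$, and $i=\rho$ contributes the single vertex $a_{\rho}$. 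A short geometric-series computation in $i$ then gives the identity, and summing over $\rho=0,\ldots,r$ yields $\int_{B(x,r)}w\,dy=w(x)\,|B(x,r)|$ for $r\leq|x|$. For $r>|x|$ the ball reaches past the root and some ancestors are missing, and a separate (easier) calculation shows the sphere averages then drop strictly below $w(x)$. Hence $Mw=w$ pointwise, and so $M^{n}w=w$ for every $n\geq 1$.

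Now fix $c>(k+1)/(k-1)$ and set $f_{j}=c\,\chi_{S_{j}}$ with $S_{j}=\{|x|=j\}$. For any $x$ at depth $m\leq j$, the set $S_{j}\cap B(x,j-m)$ consists only of the $k^{j-m}$ descendants of $x$ at depth $j$ (the cousins of an ancestor $a_{i}$ would lie at distance $2i+j-m>j-m$ for $i\geq 1$), and the elementary bound $|B(x,r)|\leq\frac{k+1}{k-1}k^{r}$ (which falls out of the same decomposition) gives
\[
Mf_{j}(x)\;\geq\;\frac{c\,k^{j-m}}{|B(x,j-m)|}\;\geq\;c\,\frac{k-1}{k+1}\;>\;1,
\]
while $Mf_{j}\geq c>1$ on $S_{j}$ itself. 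Therefore $\{Mf_{j}>1\}\supseteq B(o,j)$, giving
\[
w(\{Mf_{j}>1\})\;\geq\;\sum_{m=0}^{j}|S(o,m)|\,k^{-m}\;=\;j+1,
\]
whereas $\int_{T}|f_{j}|\,M^{n}w\,dx=c\int_{S_{j}}w\,dx=c\cdot k^{j}\cdot k^{-j}=c$. The inequality in the statement therefore holds with $c_{j}=(j+1)/c\to\infty$, and $f_{j}\in L^{1}(M^{n}w)$ is automatic. The main obstacle of the proof is the clean verification of the sphere-average identity underlying $Mw=w$; once this is in place, everything else reduces to elementary ball counting on $T$.
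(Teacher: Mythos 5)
Your proof is correct and follows the same strategy as the paper: the radial weight $w(x)=k^{-|x|}$ together with the test functions $f_j=c\,\chi_{T^j}$, and the observation that every vertex at depth $\le j$ has a ball/sphere on which the $f_j$-average is bounded below. The only stylistic differences are that you pin down $Mw=w$ exactly (the paper only records $M^n w\simeq_n w$ via the radial-weight computation for $\beta=-1$), and you work directly with the ball maximal $M$ using the bound $|B(x,r)|\le\frac{k+1}{k-1}k^r$ and a scaling constant $c>\frac{k+1}{k-1}$, whereas the paper works with $M^\circ$ and the bound $|S(x,r)|\le 2k^r$; these lead to the same $c_j\sim j\to\infty$.
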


A direct consequence of the preceding {theorem} is that the fact that a weight $w$
\[
Mw(x)\leq c_{w}w(x)\qquad{\text{for all }}x\in T
\]
is not sufficient for $\|M\|_{L^{1}(w)\rightarrow L^{1,\infty}(w)}<\infty$.

On the other hand we have the following {corollary} of Theorem \ref{Thm:FS}.
\begin{cor}
\label{cor:weak11}Let $w$ {be} a weight such that there exists $s>1$
{for which}
\[
M_{s}w(x)\leq c_{w,s}w(x)\qquad{\text{for all }} x\in T.
\]
Then 
\[
w\left(\left\{ x\in T\,:\,Mf(x)>\lambda\right\} \right)\lesssim\frac{1}{\lambda}\int_{T}|f(x)|w(x)dx.
\]
\end{cor}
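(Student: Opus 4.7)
The plan is to deduce this corollary directly from Theorem \ref{Thm:FS}. First I would fix a particular value $s_0>1$ furnished by the hypothesis, together with the finite constant $c_{w,s_0}$ for which $M_{s_0}w(x)\leq c_{w,s_0}w(x)$ for every $x\in T$. Then I would invoke Theorem \ref{Thm:FS} for exactly that $s_0$, which yields
\[
w\left(\left\{x\in T:Mf(x)>\lambda\right\}\right)\leq \frac{c_{s_0}}{\lambda}\int_T |f(x)|\,M_{s_0}w(x)\,dx.
\]
Substituting the pointwise bound $M_{s_0}w\leq c_{w,s_0}w$ on the right-hand side gives the claimed weighted weak $(1,1)$ estimate with implicit constant $c_{s_0}c_{w,s_0}$.

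There is essentially no obstacle; all the analytic content sits inside Theorem \ref{Thm:FS}, and the present corollary is a one-line consequence. The only point worth underlining is structural: the argument mirrors exactly the classical Euclidean deduction in which the $A_1$ condition $Mw\lesssim w$, combined with the Fefferman-Stein inequality (\ref{eq:FS}), produces a weighted weak-$(1,1)$ bound for $M$. The particularity of the infinite rooted $k$-ary tree does not surface in the deduction itself but in the hypothesis one is \emph{forced} to assume. Indeed, by Theorem \ref{thm:NotFS} the Euclidean-style condition $Mw\lesssim w$ is genuinely insufficient in this setting, so one cannot avoid requiring the strictly stronger $s$-bumped bound $M_s w\lesssim w$ for some $s>1$. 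Viewed in this light, the corollary should be read as exhibiting a robust nontrivial class of weights, subsumed by the $s$-bump condition, for which $M$ admits a weighted weak-type endpoint estimate on $T_k$, and as clarifying that the loss from $w$ to $M_s w$ in Theorem \ref{Thm:FS} is not a technical artefact but the natural replacement of $A_1$ in this geometry.
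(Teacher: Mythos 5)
Your proof is correct and is precisely the intended deduction: apply Theorem \ref{Thm:FS} with the given $s>1$ and then use the pointwise hypothesis $M_s w\leq c_{w,s}w$ to replace $M_s w$ by $w$ in the integral. The paper states the corollary without a separate proof because the argument is exactly this one-line consequence, so there is nothing more to compare.
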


We would like to observe that {throughout} the paper we deal with {infinite rooted} $k$-ary
trees with $k\geq2$. It is easy to check that in the case $k=1$,
Theorem \ref{Thm:FS} holds even for $s=1$, since the the measure
on the {infinite rooted} $1$-ary tree is a doubling measure, and hence the classical
theory works. Besides that, {Proposition}~\ref{prop:MMcirc} does not
hold for $k=1${;} furthermore, it is not hard to check that $M^{\circ}$
is not of weak type $(1,1)$ in this case.

The remainder of the paper is organized as follows. Section \ref{sec:FSest}
is devoted to {settling} Theorem~\ref{Thm:FS}. In Section \ref{sec:RW}
we provide examples of non trivial weights that fulfill the {assumptions of}
Corollary \ref{cor:weak11} and we settle Theorem \ref{thm:NotFS}.
Section \ref{sec:VectorValuedExtensions} is devoted to {giving} some
vector valued extensions. We end up the paper with an Appendix devoted
to {providing} a weighted counterpart of \cite[Theorem 4.1]{STInf}.

\section{\label{sec:FSest}Proof of Theorem \ref{Thm:FS}}

Before going into the proof of the theorem we present the following
proposition{.}
\begin{prop}
\label{prop:MMcirc}Let $k\geq2$ and $f\in L^{1}(T_{k})$. Then
\[
Mf(x)\leq M^{\circ}f(x)\leq2Mf(x).
\]
\end{prop}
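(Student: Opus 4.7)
The two inequalities are handled separately. For $Mf(x)\leq M^{\circ}f(x)$, the plan is to use that the ball decomposes as a disjoint union of spheres,
\[
B(x,r)=\bigsqcup_{i=0}^{r}S(x,i),
\]
so that the ball average is a convex combination of the sphere averages:
\[
\frac{1}{|B(x,r)|}\int_{B(x,r)}|f(y)|dy=\sum_{i=0}^{r}\frac{|S(x,i)|}{|B(x,r)|}\cdot\frac{1}{|S(x,i)|}\int_{S(x,i)}|f(y)|dy\leq M^{\circ}f(x).
\]
Taking the supremum over $r\geq 0$ gives the first inequality.

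For the reverse bound $M^{\circ}f(x)\leq 2Mf(x)$, the strategy is to reduce it to the purely geometric estimate
\[
|B(x,r)|\leq 2|S(x,r)|\qquad\text{for every }x\in T_{k}\text{ and every }r\geq 0.
\]
Once this is established, the sphere average is immediately controlled by the ball average,
\[
\frac{1}{|S(x,r)|}\int_{S(x,r)}|f(y)|dy\leq\frac{|B(x,r)|}{|S(x,r)|}\cdot\frac{1}{|B(x,r)|}\int_{B(x,r)}|f(y)|dy\leq 2Mf(x),
\]
and the conclusion follows after taking the supremum in $r$.

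To prove $|B(x,r)|\leq 2|S(x,r)|$, I would let $h$ denote the distance from $x$ to the root of $T_{k}$ and classify vertices $y\in S(x,r)$ by the shape of their geodesic to $x$. Either the geodesic descends directly from $x$ (contributing $k^{r}$ vertices), or it ascends the ancestor chain of $x$ for some $1\leq j\leq\min(h,r)$ steps and then enters one of the $k-1$ sibling subtrees branching off the $j$-th ancestor of $x$, continuing for a further $r-j-1$ steps (or terminating at that ancestor itself in the boundary case $j=r$). Summing the resulting geometric series gives the closed forms
\[
|S(x,r)|=\begin{cases}k^{r}+k^{r-1}, & 1\leq r\leq h,\\ k^{r}+k^{r-1}-k^{r-h-1}, & r>h,\end{cases}
\]
and summing these yields an analogous closed form for $|B(x,r)|$. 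The inequality $|B(x,r)|\leq 2|S(x,r)|$ then reduces, after clearing denominators, to a polynomial inequality that can be written in the shape $(k-2)\,P(k,r,h)+c\geq 0$ with $P\geq 0$ and $c\geq 0$, which holds because $k\geq 2$ (equivalently $(k-2)(k+1)\geq 0$, i.e.\ $k^{2}\geq k+2$).

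The main obstacle will be the bookkeeping: one has to account for the asymmetry caused by the root (which has $k$ rather than $k+1$ neighbors), keep track of the $k-1$ sibling subtrees at each ancestor, and make sure the two branches of the formula agree at the boundary $r=h$. Once the explicit expressions for $|S(x,r)|$ and $|B(x,r)|$ are in hand, the final inequality becomes a routine algebraic verification.
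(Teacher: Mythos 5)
Your proof is correct and follows essentially the same route as the paper: the first inequality comes from decomposing $B(x,r)$ as the disjoint union of the spheres $S(x,j)$, and the second reduces to the purely combinatorial bound $|B(x,r)|\leq 2|S(x,r)|$. The only difference is that you work out exact closed forms for $|S(x,r)|$ and $|B(x,r)|$ in terms of the depth $h$ of $x$, whereas the paper settles for the rougher estimate $|S(x,j)|\simeq k^{j}$ and the geometric-series bound $\sum_{j=0}^{r}k^{-j}\leq 2$; your version is a bit more laborious but is actually tighter, since the paper's ``$\simeq$'' taken literally loses a constant on both numerator and denominator and would not by itself deliver the clean factor $2$.
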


\begin{proof}
Since every ball can be written as the disjoint union of spheres,
we have the pointwise estimate 
\[
Mf(x)\leq M^{\circ}f(x).
\]
For the other inequality, let $r\in\mathbb{N}${. We} begin observing
that  
\[
\begin{split}\frac{|B(x,r)|}{|S(x,r)|} & =\frac{\sum_{j=0}^{r}|S(x,j)|}{|S(x,r)|}\\
 & \simeq\frac{k^{r}+k^{r-1}+\dots+1}{k^{r}}=\sum_{j=0}^{r}\frac{1}{k^{j}}\leq2.
\end{split}
\]
Hence
\[
\begin{split} & \frac{1}{|S(x,r)|}\int_{S(x,r)}|f(y)|dy\\
 & \leq\frac{|B(x,r)|}{|S(x,r)|}\frac{1}{|B(x,r)|}\int_{B(x,r)}|f(y)|dy\\
 & \leq2\frac{1}{|B(x,r)|}\int_{B(x,r)}|f(y)|dy
\end{split}
\]
and this yields 
\[
M^{\circ}f(x)\leq2Mf(x).\qedhere
\]
\end{proof}
From the previous proposition, if we denote $M_{s}^{\circ}(w)=M^{\circ}(w^{s})^{\frac{1}{s}}$,
it {readily} follows that to settle Theorem \ref{Thm:FS} it suffices
to show that 
\begin{equation}
w\left(\left\{ x\in T\,:\,M^{\circ}f(x)>\lambda\right\} \right)\leq c_{s}\frac{1}{\lambda}\int_{T}|f(x)|M_{s}^{\circ}w(x)dx,\label{treeweak}
\end{equation}
for all $f\in L_{1}(T)$ and $\lambda>0$.

We will denote by $\1\otimes w$ the product measure 
\[
\1\otimes w(A\times B)=|A|w(B)=\sum_{(x,y)\in A\times B}w(y),\qquad A,B\subset T.
\]

The proof of Theorem \ref{Thm:FS} follows the scheme provided
by Naor and Tao \cite{NT}. In particular \cite[Lemma 5.1]{NT} is
a key part of their proof. That lemma is obtained {exploiting}
an expander and combinatorial argument that relies upon the symmetry
of the {infinite rooted} $k$-ary tree. The role played by the fact that the measure
on the space is the counting measure may seem relevant in the proof
to provide a suitable ``sharp'' estimate. However, in the following
lemma we overcome that difficulty providing a weighted version that
contains a precise enough bound that allows us to push the scheme in \cite{NT}.
\begin{lem}
\label{lem:Borders}Let $E,F$ be finite subsets of $T$, $s>1$ and
let $r\geq0$ be an integer. Then
\[
\1\otimes w\left(\{(x,y)\in E\times F:\ d(x,y)=r\}\right)\le c_{s}k^{r\frac{s'}{s'+1}}w(F)^{\frac{1}{s'+1}}M_{s}^{\circ}w(E)^{\frac{s'}{s'+1}}
\]
where $s'=\frac{s}{s-1}$ and $c_{s}$ is a constant depending {only} on
$s$.
\end{lem}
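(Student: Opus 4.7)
The strategy is to extend Naor and Tao's scheme for Lemma 5.1 of \cite{NT} to accommodate the weight $w$. Let $L:=\1\otimes w(\{(x,y)\in E\times F:d(x,y)=r\})=\sum_{x\in E}w(F\cap S(x,r))$.

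First, for each $x\in E$, H\"older's inequality with exponents $s,s'$ gives
\[
w(F\cap S(x,r))\le |F\cap S(x,r)|^{1/s'}\bigl(w^s(S(x,r))\bigr)^{1/s}\le Ck^{r/s}M_s^\circ w(x)|F\cap S(x,r)|^{1/s'},
\]
using $w^s(S(x,r))\le M^\circ(w^s)(x)|S(x,r)|$ and $|S(x,r)|\le Ck^r$ (cf.~Proposition~\ref{prop:MMcirc}). Summing over $x\in E$ yields
\[
L\le Ck^{r/s}\sum_{x\in E}M_s^\circ w(x)\,|F\cap S(x,r)|^{1/s'}.
\]

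Next I apply H\"older's inequality with exponents $(s'+1)/s'$ and $s'+1$ to split the sum and extract the desired power of $M_s^\circ w(E)$:
\[
\sum_{x\in E}M_s^\circ w(x)|F\cap S(x,r)|^{1/s'}\le M_s^\circ w(E)^{s'/(s'+1)}\Bigl(\sum_{x\in E}M_s^\circ w(x)|F\cap S(x,r)|^{(s'+1)/s'}\Bigr)^{1/(s'+1)}.
\]
The remaining sum is bounded by combining a layer-cake decomposition in $w$ over $F$ (so that weighted sums on $F$ become integrals of unweighted counts on the level sets $F_u=\{y\in F:w(y)>u\}$, with $w(F)=\int_0^\infty|F_u|\,du$) with Naor--Tao's unweighted Lemma 5.1 $|T(E,F_u,r)|\le Ck^{r/2}|E|^{1/2}|F_u|^{1/2}$, interpolated against the trivial estimate $|T(E,F_u,r)|\le |E|k^r$ to yield
\[
|T(E,F_u,r)|\le Ck^{rs'/(s'+1)}|E|^{s'/(s'+1)}|F_u|^{1/(s'+1)}.
\]
Integrating this over the levels and collecting the exponents from all three H\"older applications then produces the claimed inequality.

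\emph{Main obstacle.} The delicate part is the last step: organizing the layer-cake so that the factor $|F|$ arising from Naor--Tao's lemma is correctly traded for $w(F)^{1/(s'+1)}$, and so that the combinatorial saving of $k^{r/2}$ over the trivial $k^r$ is transferred into the precise exponent $s'/(s'+1)$. The constant $c_s$ blows up as $s\to 1^+$ because of the $1/s$ factor appearing in the initial sphere-H\"older bound and the interpolation weights used to balance the Naor--Tao estimate against the trivial ones.
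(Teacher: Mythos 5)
Your first two H\"older steps are sound and match, in spirit, part of the paper's own computation: the sphere--H\"older bound $w(F\cap S(x,r))\le |F\cap S(x,r)|^{1/s'}k^{r/s}M_s^\circ w(x)$ (up to a harmless factor from $|S(x,r)|\le 2k^r$) is exactly the weighted input the paper also uses. The difficulty is the third step, which is hand-waved and, as far as I can see, does not go through. After your second H\"older application you are left to show
\[
\sum_{x\in E}M_s^\circ w(x)\,|F\cap S(x,r)|^{\frac{s'+1}{s'}}\lesssim k^{r/s'}\,w(F),
\]
(the exponent $k^{r(\frac{s'}{s'+1}-\frac{1}{s})(s'+1)}=k^{r/s'}$), but this quantity involves $M_s^\circ w$ on $E$ and only the \emph{cardinality} of $F\cap S(x,r)$, not the weight $w$ on $F$. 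Your proposed layer-cake in $w$ over $F$ has nothing to act on here; and if one instead applies layer-cake at the very start ($L=\int_0^\infty |T(E,F_u,r)|\,du$) and then interpolates Naor--Tao's Lemma 5.1 against the trivial bound, the result is $k^{r\frac{s'}{s'+1}}|E|^{\frac{s'}{s'+1}}\int_0^\infty|F_u|^{\frac{1}{s'+1}}\,du$, which is wrong on two counts: it has $|E|$ rather than $M_s^\circ w(E)$, and $\int_0^\infty|F_u|^{1/(s'+1)}\,du$ is not controlled by $w(F)^{1/(s'+1)}$ (concavity goes the wrong way). The even cruder bound $|F\cap S(x,r)|^{(s'+1)/s'}\le k^{r/s'}|F\cap S(x,r)|$ reduces matters to $\sum_{x}M_s^\circ w(x)|F\cap S(x,r)|\lesssim w(F)$, which is simply false (take $w\equiv1$).

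The paper avoids this dead end by never leaving the combinatorial regime: it splits $E,F$ by tree generation, $E_j=E\cap T^j$, $F_i=F\cap T^i$, notes that $(x,y)\in E_j\times F_i$ can have $d(x,y)=r$ only when $i=j+r-2m$, and obtains for each such pair the two complementary bounds
\[
\1\otimes w\bigl(T(E_j,F_i,r)\bigr)\le\min\bigl\{k^{r-m/s'}M_s^\circ w(E_j),\;k^m w(F_i)\bigr\},
\]
the first coming from the same sphere--H\"older step you use together with $|F_i\cap S(x,r)|\le k^{r-m}$, the second from counting at most $k^m$ ancestors at level $j$. Optimizing a threshold parameter over the difference $i-j$ then produces the $k^{r\frac{s'}{s'+1}}w(F)^{\frac{1}{s'+1}}M_s^\circ w(E)^{\frac{s'}{s'+1}}$ bound directly. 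So the missing idea in your proposal is the level-by-level decomposition, which is what lets the weight $w$ on $F$ and $M_s^\circ w$ on $E$ both appear from the outset; a global interpolation against the unweighted Naor--Tao lemma cannot recover them.
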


\begin{proof}
We split the {vertex set of the} tree as $T=\bigcup_{j=0}^{\infty}T^{j}$, where $T^{j}$
is the generation of the tree at depth $j$. We split as well accordingly,
$E$ and $F$. We define $E_{j}=E\cap T^{j}$ and $F_{j}=F\cap T^{j}$.
An element in $E_{j}$ and an element in $F_{i}$ can be at distance
exactly $r$, if and only if $i=j+r-2m$ for some $m\in\{0,\ldots,r\}$.
Hence we can write 
\begin{equation}
\begin{split} & \1\otimes w\left(\{(x,y)\in E\times F:d(x,y)=r\}\right)\\
 & =\sum_{m=0}^{r}\sum_{\substack{i,j\in\N\cup\{0\}\\
i=j+r-2m
}
}\1\otimes w\left(\{(x,y)\in E_{j}\times F_{i}:d(x,y)=r\}\right).
\end{split}
\label{eq:levels}
\end{equation}
Now we fix $m\in\{0,\ldots,r\}$ and $i,j\in\N\cup\{0\}$ such that
$i=j+r-2m$. Note that if $x\in T^{j}$ and $y\in T_{i}$ are at distance
$r$ in $T$, then the $m^{th}$ parent of $x$ coincides with the
$(r-m)^{th}$ parent of $y$. This leads to the fact that for each
$y\in T^{i}$ there exist at most $k^{m}$ elements of $x\in T^{j}$
with $d(x,y)=r$. From this it readily follows that
\[
\1\otimes w\left(\{(x,y)\in E_{j}\times F_{i}:d(x,y)=r\}\right)\leq k^{m}w(F_{i}).
\]
On the other hand note that {for} each $x\in T^{j}$ {there} are
at most $k^{r-m}$ elements of $y\in T^{i}$ with $d(x,y)=r${. Hence} we
have that for each $s>1$, 
\[
\begin{split} & \1\otimes w\left(\{(x,y)\in E_{j}\times F_{i}:d(x,y)=r\}\right)\\
 & =\sum_{x\in E_{j}}\sum_{\stackrel{y\in F_{i}}{d(x,y)=r}}w(y)\\
 & =\sum_{x\in E_{j}}w(F_{i}\cap S(x,r))\\
 & \leq\sum_{x\in E_{j}}|F_{i}\cap S(x,r)|^{\frac{1}{s'}}w^{s}(F_{i}\cap S(x,r))^{\frac{1}{s}}\\
 & =\sum_{x\in E_{j}}|F_{i}\cap S(x,r)|^{\frac{1}{s'}}k^{\frac{r}{s}}\frac{1}{k^{\frac{r}{s}}}w^{s}(F_{i}\cap S(x,r))^{\frac{1}{s}}\\
 & \leq\sum_{x\in E_{j}}|F_{i}\cap S(x,r)|^{\frac{1}{s'}}k^{\frac{r}{s}}{M_{s} ^\circ}(w)(x)\\
 & \leq k^{\frac{m-r}{s'}}k^{\frac{r}{s}}{M_{s}^\circ}(w)(E_{j}).
\end{split}
\]
Thus combining the ideas above
\begin{equation}
\1\otimes w\left(\{(x,y)\in E_{j}\times F_{i}:d(x,y)=r\}\right)\leq\min\left\{ k^{r-\frac{m}{s'}}{M_{s}^\circ}w(E_{j}),k^{m}w(F_{i})\right\} .\label{eq:Levelm}
\end{equation}
Taking into account (\ref{eq:levels}) and (\ref{eq:Levelm}), to
end the proof it suffices to show that
\begin{equation}
\sum_{m=0}^{r}\sum_{\substack{i,j\in\N\cup\{0\}\\
i=j+r-2m
}
}\min\left\{ k^{\frac{r}{s}+\frac{r-m}{s'}}{M_{s}^\circ}w(E_{j}),k^{m}w(F_{i})\right\} \le c_{s}k^{r\frac{s'}{s'+1}}w(F_{i})^{\frac{1}{s'+1}}{M_{s}^\circ}w(E_{j})^{\frac{s'}{s'+1}}\label{eq:target}
\end{equation}
Let us define $c_{j}=\frac{M_{s}^\circ w(E_{j})}{k^{\frac{j}{s'}}}$
and $d_{j}=\frac{w(F_{j})}{k^{j}}$ for $j\geq0$ and $c_{j}=d_{j}=0$
for $j<0${. Then}, 
\begin{equation}
\sum_{j=0}^{\infty}k^{\frac{j}{s'}}c_{j}={M_{s}^\circ}w(E)\quad\mathrm{and}\quad\sum_{j=0}^{\infty}k^{j}d_{j}=w(F),\label{eq:redfcjdj}
\end{equation}
and we have that {whenever $i=j+r-2m$,}
\[
\begin{split}\min\left\{ k^{\frac{r}{s}+\frac{r-m}{s'}}{M_{s}^\circ}w(E_{j}),k^{m}w(F_{i})\right\}  & =\min\left\{ k^{\frac{r}{s}+\frac{r-m}{s'}}k^{\frac{j}{s'}}c_{j},k^{m}k^{i}d_{i}\right\} \\
 & =\min\left\{ k^{r-\frac{r}{2s'}}k^{\frac{i+j}{2s'}}c_{j},k^{\frac{r}{2}}k^{\frac{i+j}{2}}d_{i}\right\}{.} 
\end{split}
\]
Taking the identity above into account, settling (\ref{eq:target})
reduces to show that 
\[
\sum_{m=0}^{r}\sum_{\substack{i,j\in\N\cup\{0\}\\
i=j+r-2m
}
}\min\left\{ k^{r-\frac{r}{2s'}}k^{\frac{i+j}{2s'}}c_{j},k^{\frac{r}{2}}k^{\frac{i+j}{2}}d_{i}\right\} \leq c_{s}k^{r\frac{s'}{s'+1}}w(F_{i})^{\frac{1}{s'+1}}{M_{s}^\circ}w(E_{j})^{\frac{s'}{s'+1}}.
\]
To prove this inequality, we fix {a real parameter $\alpha$} to
be chosen later, and {argue} as follows{:} 
\begin{eqnarray*}
 &  & \sum_{m=0}^{r}\sum_{\substack{i,j\in\N\cup\{0\}\\
i=j+r-2m
}
}\min\left\{ k^{r-\frac{r}{2s'}}k^{\frac{i+j}{2s'}}c_{j},k^{\frac{r}{2}}k^{\frac{i+j}{2}}d_{i}\right\} \\
 &  & \leq\sum_{\substack{i,j\in\N\cup\{0\}\\
i<j+\alpha
}
}k^{r-\frac{r}{2s'}}k^{\frac{i+j}{2s'}}c_{j}+\sum_{\substack{i,j\in\N\cup\{0\}\\
i\geq j+\alpha
}
}k^{\frac{r}{2}}k^{\frac{i+j}{2}}d_{i}\\
&  & {= k^{r-\frac{r}{2s'}}\sum_{j=0}^\infty\sum_{i\in\N\cup\{0\}:i<j+\alpha}k^{\frac{i+j}{2s'}}c_{j}+k^{\frac{r}{2}}\sum_{i=0}^\infty\sum_{j\in\N\cup\{0\}:j\leq i-\alpha}k^{\frac{i+j}{2}}d_{i}}\\
 &  & {\lesssim{}} c_{s}k^{r-\frac{r}{2s'}}\sum_{j=0}^{\infty}k^{\frac{j}{s'}+\frac{\alpha}{2s'}}c_{j}+k^{\frac{r}{2}}\sum_{i=0}^{\infty}k^{i-\frac{\alpha}{2}}d_{i}\\
 &  & \le c_{s}\left(k^{r-\frac{r}{2s'}}k^{\frac{\alpha}{2s'}}{M_{s}^\circ}w(E_{j})+k^{\frac{r}{2}}k^{-\frac{\alpha}{2}}w(F_{i})\right){.}
\end{eqnarray*}
Now {we} provide some hints about how to optimize on $\alpha$. Let $f_{a,b}(\alpha)=k^{\frac{\alpha}{2s'}}a+k^{-\frac{\alpha}{2}}b$
for $a,b>0$. Note that $f_{a,b}$ reaches its absolute minimum at
$\frac{2\log_{k}(\frac{b}{a})}{1+\frac{1}{s'}}.$ Hence choosing $a_{0}=k^{r-\frac{r}{2s'}}{M_{s}^\circ}w(E_{j})$
and $b_{0}=k^{\frac{r}{2}}w(F_{i})$ and $\alpha_{0}=\frac{2\log_{k}(\frac{b_{0}}{a_{0}})}{1+\frac{1}{s'}}$
we have that 
\[
\begin{split} \sum_{m=0}^{r}\sum_{\substack{i,j\in\N\cup\{0\}\\
i=j+r-2m
}
}&\min\left\{ k^{r-\frac{r}{2s'}}k^{\frac{i+j}{2s'}}c_{j},k^{\frac{r}{2}}k^{\frac{i+j}{2}}d_{i}\right\}\\
 & \leq c_{s}f_{a_{0},b_{0}}(\alpha_{0})\\
 & \leq c_{s}\left(k^{r\frac{s'}{s'+1}}w(F_{i})^{\frac{1}{s'+1}}{M_{s}^\circ}w(E_{j})^{\frac{s'}{s'+1}}+k^{r\frac{{s'}}{s'+1}}w(F_{i})^{\frac{1}{s'+1}}{M_{s}^\circ}w(E_{j})^{\frac{s'}{s'+1}}\right)\\
 & \leq c_{s}k^{r\frac{s'}{s'+1}}w(F_{i})^{\frac{1}{s'+1}}{M_{s}^\circ}w(E_{j})^{\frac{s'}{s'+1}}
\end{split}
\]
and hence we are done.
\end{proof}
For each $r\geq0$, we denote by $A_{r}^{\circ}$ the spherical averaging
operator 
\[
A_{r}^{\circ}f(x)=\frac{1}{|S(x,r)|}\sum_{y\in S(x,r)}|f(y)|.
\]
Hence $M^{\circ}f(x)=\sup_{r\ge0}A_{r}^{\circ}f(x)$. We can use Lemma
\ref{lem:Borders} to obtain a distributional estimate on $A_{r}^{\circ}$. 
\begin{lem}
\label{lem:sumLevels} Let {$r\geq 1$} and $\lambda>0$. Then
\[
w\left(\left\{ A_{r}^{\circ}f\geq\lambda\right\} \right)\lesssim c_{s}\sum_{\substack{n\in\N\cup\{0\}\\
1\leq2^{n}\leq2k^{r}
}
}\left(\frac{2^{n}}{k^{r}}\right)^{\frac{1}{2s'}}2^{n}{M_{s}^\circ}w\left(\left\{ |f|\geq2^{n-1}\lambda\right\} \right)
\]
where {$c_s$ depends only on $s$ and} $c_{s}\rightarrow\infty$ when $s\rightarrow1$.
\end{lem}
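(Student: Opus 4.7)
The plan is to combine a dyadic decomposition of $|f|$ with the symmetric ``expander'' bound of Lemma~\ref{lem:Borders}, broadly following the scheme of Naor and Tao but inserting a weighted H\"older step tailored to produce the exponent $1/(2s')$. I set $\Delta_n=\{2^{n-1}\lambda\le|f|<2^n\lambda\}$ for $n\ge 0$ and split $|f|=|f|\1_{\{|f|<\lambda/2\}}+\sum_{n\ge 0}|f|\1_{\Delta_n}$. Since the first summand contributes at most $\lambda/2$ to $A_r^\circ f(y)$ and $|f|<2^n\lambda$ on $\Delta_n$, for every $y\in F:=\{A_r^\circ f\ge\lambda\}$ one obtains the pointwise lower bound
\[
\sum_{n\ge 0}2^n\,|\Delta_n\cap S(y,r)|\ \ge\ \frac{|S(y,r)|}{2}\ \gtrsim\ k^r.
\]

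I would then split $F=F^{\mathrm{LM}}\cup F^{\mathrm{H}}$ according to whether this sum is essentially carried by the indices with $2^n\le 2k^r$ or by the tail $2^n>2k^r$. The ``high'' part $F^{\mathrm{H}}$ consists of those $y\in F$ for which some $y'\in S(y,r)$ satisfies $|f(y')|>k^r\lambda$; any such $y'$ lies in $U_{N^\ast}:=\{|f|\ge 2^{N^\ast-1}\lambda\}$, where $N^\ast$ is the largest integer with $2^{N^\ast}\le 2k^r$, so $|U_{N^\ast}\cap S(y,r)|\ge 1$ for every $y\in F^{\mathrm{H}}$. Applying Lemma~\ref{lem:Borders} with $E=U_{N^\ast}$ and $F=F^{\mathrm{H}}$ and simplifying yields $w(F^{\mathrm{H}})\lesssim c_s\,k^{r}\,M_s^\circ w(U_{N^\ast})$, which is of the same size as the $n=N^\ast$ term of the target sum, since $2^{N^\ast}\asymp k^r$ makes $(2^{N^\ast}/k^r)^{1/(2s')}\asymp 1$.

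On the ``low/mid'' part $F^{\mathrm{LM}}$ the displayed lower bound holds with the sum restricted to $n$ satisfying $1\le 2^n\le 2k^r$. A Chebyshev-type manipulation converts this pointwise estimate into
\[
w(F^{\mathrm{LM}})\ \lesssim\ \frac{1}{k^r}\sum_{n:\,1\le 2^n\le 2k^r}\!2^n\sum_{y\in F^{\mathrm{LM}}}w(y)\,|\Delta_n\cap S(y,r)|,
\]
and each inner sum is precisely the quantity controlled by Lemma~\ref{lem:Borders} applied with $E=\Delta_n$ and $F=F^{\mathrm{LM}}$. After substitution, collecting powers of $k$, and dividing by $w(F^{\mathrm{LM}})^{1/(s'+1)}$, one arrives at
\[
w(F^{\mathrm{LM}})^{s'/(s'+1)}\ \lesssim\ c_s\,k^{-r/(s'+1)}\sum_{n:\,1\le 2^n\le 2k^r}2^n\,M_s^\circ w(\Delta_n)^{s'/(s'+1)}.
\]

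The decisive step is to raise this to the power $(s'+1)/s'$ and open the sum by the weighted H\"older inequality $(\sum_n a_n)^p\le(\sum_n\alpha_n)^{p-1}\sum_n a_n^p\alpha_n^{1-p}$ with $p=(s'+1)/s'$. The choice $\alpha_n=(2^n/k^r)^{1/2}$ is critical: this geometric progression is summable to an absolute constant precisely because the range $1\le 2^n\le 2k^r$ caps the partial sums at a bounded multiple of $k^{-r/2}\cdot 2^{N^\ast/2}=O(1)$. A direct computation then shows that the factor $\alpha_n^{-1/s'}=(2^n/k^r)^{-1/(2s')}$ combines with the weight $2^{n(s'+1)/s'}$ and the outside $k^{-r/s'}$ to leave exactly $2^n(2^n/k^r)^{1/(2s')}$, and using $M_s^\circ w(\Delta_n)\le M_s^\circ w(U_n)$ one obtains the stated inequality for $F^{\mathrm{LM}}$; together with the $F^{\mathrm{H}}$ estimate this closes the proof. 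I expect this H\"older step to be the main obstacle: the naive pigeonhole that assigns each $y\in F$ to a single level $n$ only yields the weaker exponent $1/s'$, and the gain to $1/(2s')$ rests specifically on exploiting the geometric summability of $\alpha_n=(2^n/k^r)^{1/2}$ over the truncated range $2^n\le 2k^r$.
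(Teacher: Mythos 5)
Your proof is correct and it reaches the lemma by a genuinely different route than the paper's. The paper assigns each point of $\{A_r^\circ f\ge 1\}$ (minus the ``high'' set) to a single exceptional level via a \emph{geometrically weighted pigeonhole}: if $\sum_n 2^n A_r^\circ(\chi_{E_n})\ge\frac12$, then for some $n$ one has $A_r^\circ(\chi_{E_n})\ge\frac{2^\beta-1}{2^{n+2}}(2^n/k^r)^\beta$. It then bounds $w(F_n)$ for each of the resulting level sets $F_n$ by one application of Lemma~\ref{lem:Borders} with $(E,F)=(E_n,F_n)$, and finally optimizes $\beta=\tfrac{1}{2(s'+1)}$. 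You skip the pigeonhole altogether: you integrate the pointwise inequality $\tfrac{1}{k^r}\sum_n 2^n|\Delta_n\cap S(y,r)|\gtrsim 1$ over $y\in F^{\mathrm{LM}}$ against $w$, apply Lemma~\ref{lem:Borders} once per $n$ but always with the \emph{same} set $F^{\mathrm{LM}}$ on the right, divide by $w(F^{\mathrm{LM}})^{1/(s'+1)}$, and then redistribute the outer $\tfrac{s'+1}{s'}$-th power by a weighted H\"older inequality with weights $\alpha_n=(2^n/k^r)^{1/2}$. In both arguments the same free parameter is lurking (your exponent $\tfrac12$ in $\alpha_n$ is $(s'+1)\beta$ in the paper's notation), and both rely on the geometric summability over the truncated range $1\le 2^n\le 2k^r$ to avoid a $\log(k^r)$ loss; they therefore produce exactly the same family of bounds $(2^n/k^r)^{(1-\theta)/s'}2^n M_s^\circ w(U_n)$, $\theta\in(0,1)$. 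Your treatment of the high part $F^{\mathrm{H}}$ and the absorption of its contribution into the $n=N^\ast$ term is also fine, and the finiteness needed for Lemma~\ref{lem:Borders} is harmless since $\{A_r^\circ f\ge\lambda\}$ is finite for $f\in L^1$ (because $|S(y,r)|\le k^r$).

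One small inaccuracy in your closing remark: it is not true that the pigeonhole route is stuck with the exponent $1/s'$. The \emph{unweighted} pigeonhole (constant thresholds) indeed incurs a $\log(k^r)$ factor, but the weighted pigeonhole the paper actually uses, with thresholds $\sim 2^{-n}(2^n/k^r)^\beta$, reaches the same $(2^n/k^r)^{1/(2s')}$ decay as your H\"older step; the two mechanisms are equivalent in strength. The real dichotomy is not ``pigeonhole vs.~H\"older'' but ``flat vs.~geometric'' weighting of the levels $n$.
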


\begin{rem}
We would like to note that the decay $\left(\frac{2^{n}}{k^{r}}\right)^{\frac{1}{2s'}}$
will be fundamental for our purposes. Note that in the case $s=1$
then we would not have this decay and, as we will see later, in the
absence of that decay we would not be able to settle Theorem \ref{Thm:FS}.
At this point we would like to note as well that this inequality with
a good enough decay in $\frac{2^{n}}{k^{r}}$ and $s=1$ cannot hold
since that would contradict Theorem \ref{thm:NotFS}.
\end{rem}

\begin{proof}[Proof of Lemma \ref{lem:sumLevels}]
We can assume without loss of generality $f$ to be non-negative {and $\lambda=1$}.
We bound 
\begin{equation}
f\leq\frac{1}{2}+\sum_{\substack{n\in\N\cup\{0\}\\
1\leq2^{n}\leq k^{r}
}
}2^{n}\chi_{E_{n}}+f\chi_{\{f\geq\frac{1}{2}k^{r}\}},\label{eq:troceado}
\end{equation}
where $E_{n}$ is the sublevel set 
\begin{equation}
E_{n}=\left\{ 2^{n-1}\leq f<2^{n}\right\} .\label{eq:niveles}
\end{equation}
Hence 
\begin{equation}
A_{r}^{\circ}f\leq\frac{1}{2}+\sum_{\substack{n\in\N\cup\{0\}\\
1\leq2^{n}\leq k^{r}
}
}2^{n}A_{r}^{\circ}\left(\chi_{E_{n}}\right)+A_{r}^{\circ}\left(f\chi_{\{f\geq\frac{1}{2}k^{r}\}}\right).\label{eq:troceadopromediado}
\end{equation}
Since $|S(x,r)|\leq k^{r}$ we see that 
\begin{equation}
\begin{split}w\left(A_{r}^{\circ}\left(f\chi_{\{f\geq\frac{1}{2}k^{r}\}}\right)\neq0\right) & \le w\left(\bigcup_{y\in\{f\geq\frac{1}{2}k^{r}\}}{S}(y,r)\right)\\
 & \leq\sum_{y\in\{f\geq\frac{1}{2}k^{r}\}}w({S}(y,r))=|S(x,r)|\sum_{y\in\{f\geq\frac{1}{2}k^{r}\}}\frac{w({S}(y,r))}{|S(x,r)|}\\
 & \leq k^{r}{M^\circ}w\left(\left\{f\geq\frac{1}{2}k^{r}\right\}\right){.}
\end{split}
\label{eq:TerminoFacil}
\end{equation}
Thus we have that combining the estimates above
\[
\begin{split} w\left(A_{r}^{\circ}f\geq1\right)
   & {{}\leq{}} w\left(\sum_{\substack{n\in\N\cup\{0\}\\
1\leq2^{n}\leq k^{r}
}
}2^{n}A_{r}^{\circ}\left(\chi_{E_{n}}\right)\geq\frac{1}{2}\right)+w\left(A_{r}^{\circ}\left(f\chi_{\{f\geq\frac{1}{2}k^{r}\}}\right)\neq0\right)\\
 & \leq w\left(\sum_{\substack{n\in\N\cup\{0\}\\
1\leq2^{n}\leq k^{r}
}
}2^{n}A_{r}^{\circ}\left(\chi_{E_{n}}\right)\geq\frac{1}{2}\right)+k^{r}{M^\circ}w\left(\left\{f\geq\frac{1}{2}k^{r}\right\}\right){.}
\end{split}
\]
Let {$\beta$ be a real parameter such that $0<\beta<1$} to be chosen {later}. Note that if 
\[
\sum_{\substack{n\in\N\cup\{0\}\\
1\leq2^{n}\leq k^{r}
}
}2^{n}A_{r}^{\circ}\left(\chi_{E_{n}}\right)\geq\frac{1}{2}
\]
then we necessarily have {some} $n\in\N$ {such} that $1\leq2^{n}\leq k^{r}$ {for which}
\[
A_{r}^{\circ}\left(\chi_{E_{n}}\right)\geq\frac{2 ^{\beta}-1}{2^{{n+2}}}\left(\frac{2^{n}}{k^{r}}\right)^{\beta}.
\]
Indeed, otherwise we have that
\[
\begin{split}\frac{1}{2} & \leq\sum_{\substack{n\in\N\cup\{0\}\\
1\leq2^{n}\leq k^{r}
}
}2^{n}A_{r}^{\circ}\left(\chi_{E_{n}}\right)\le\frac{2^{\beta}-1}{{4}k^{r\beta}}\sum_{\substack{n\in\N\cup\{0\}\\
1\leq2^{n}\leq k^{r}
}
}2^{\beta n}\\
 & \leq\frac{(2^{\beta}-1)}{{4}k^{r\beta}}\frac{(2^{\beta{(}\log_{2}k^{r}{{}+1)}}-1)}{\left(2^{\beta}-1\right)}=\frac{{2^\beta}k^{r\beta}-1}{{4}k^{r\beta}}<{\frac{2^\beta}{4}}<\frac{1}{2}
\end{split}
\]
which is a contraction. Thus 
\[
w\left(A_{r}^{\circ}f\geq{1}\right)\leq\sum_{\substack{n\in\N\cup\{0\}\\
1\leq2^{n}\leq k^{r}
}
}w(F_{n})+k^{r}{M^\circ}w\left(\left\{{f\geq\frac{1}{2}k^{r}}\right\}\right)
\]
where 
\[
F_{n}=\left\{ A_{r}^{\circ}\left(\chi_{E_{n}}\right)\geq\frac{2^{\beta}-1}{2^{{n+2}}}\left(\frac{2^{n}}{k^{r}}\right)^{\beta}\right\} .
\]
Note that $F_{n}$ is finite and observe that{,} since $A_{r}^{\circ}$
is a selfadjoint operator,
\[
\begin{split} & \frac{1}{k^{r}}\1\otimes w\left(\{(x,y)\in E_{n}\times F_{n}:d(x,y)=r\}\right)\\
 & =\frac{1}{k^{r}}\sum_{x\in E_{n}}\sum_{\stackrel{y\in F_{n}}{d(x,y)=r}}w(y)\simeq\int_{T}\chi_{E_{n}}A_{r}^{\circ}(w\chi_{F_{n}})({x}){dx}=\int_{F_{n}}wA_{r}^{\circ}(\chi_{E_{n}})(y){dy}\\
 & \geq w(F_{n})\frac{2^{\beta}-1}{2^{{n+2}}}\left(\frac{2^{n}}{k^{r}}\right)^{\beta}{.}
\end{split}
\]
Now, using Lemma \ref{lem:Borders}{,}
\[
\frac{1}{k^{r}}\1\otimes w\left(\{(x,y)\in E_{n}\times F_{n}:d(x,y)=r\}\right)\leq c_{s}k^{-\frac{r}{s'+1}}w(F_{n})^{\frac{1}{s'+1}}M_{s}^{\circ}w(E_{n})^{\frac{s'}{s'+1}}.
\]
Hence
\[
\begin{split} w(F_{n})\frac{2^\beta-1}{2^{{n+2}}}&\left(\frac{2^{n}}{k^{r}}\right)^{\beta}\leq c_{s}k^{-\frac{r}{s'+1}}w(F_{n})^{\frac{1}{s'+1}}{M_{s}^\circ}w(E_{n})^{\frac{s'}{s'+1}}\\
\iff & w(F_{n})^{1-\frac{1}{s'+1}}\leq c_{s}\frac{1}{2^{\beta}-1}k^{\beta r-\frac{r}{s'+1}}2^{(1-\beta)n}{M_{s}^\circ}w(E_{n})^{\frac{s'}{s'+1}}\\
\iff & w(F_{n})^{\frac{s'}{s'+1}}\leq c_{s}\frac{1}{2^{\beta}-1}k^{\beta r-\frac{r}{s'+1}}2^{(1-\beta)n}{M_{s}^\circ}w(E_{n})^{\frac{s'}{s'+1}}\\
\iff & w(F_{n})\leq c_{s}\frac{1}{{\left(2^{\beta}-1\right)^{\frac{s'+1}{s'}}}}k^{r\left((s'+1)\beta-1\right)\frac{1}{s'}}2^{\frac{s'+1}{s'}(1-\beta)n}{M_{s}^\circ}w(E_{n})
\end{split}
\]
Choosing $\beta=\frac{1}{2(s'+1)}$ we have that
\[
w(F_{n})\leq c_{s}k^{-\frac{r}{2s'}}2^{\frac{n}{2s'}}2^{n}{M_{s}^\circ}w(E_{n})
        {{}\leq c_s\left(\frac{2^{n}}{k^{r}}\right)^{\frac{1}{2s'}}2^{n}M_s^\circ w\left(\left\{f\geq 2^{n-1}\right\}\right)}.
\]
{Therefore
\begin{align*}
  w(\{A_r^\circ f\geq 1\})
     &\leq c_s\sum_{\substack{n\in\N\cup\{0\}\\1\leq 2^n\leq k^r}}\left(\frac{2^{n}}{k^{r}}
     \right)^{\frac{1}{2s'}}2^{n}M_s^\circ w\left(\left\{f\geq 2^{n-1}\right\}\right)+k^rM^\circ w\left(\left\{f\geq\frac{1}{2}k^{r}\right\}\right).
\end{align*}
Since, in the right-hand side, the second term is dominated by the last term of the summation in the fist term,} this yields the desired conclusion.\end{proof}

Combining the ingredients above we are in the position to settle Theorem
\ref{Thm:FS}{.}
\begin{proof}
As we argued above, it suffices to settle (\ref{treeweak}). Since
$M^{\circ}f=\sup_{r\geq0}A_{r}^{\circ}f$, Lemma~\ref{lem:sumLevels}
implies that 
\[
\begin{split}w\left(M^{\circ}f\geq\lambda\right) & \leq\sum_{r=0}^{\infty}w\left(A_{r}^{\circ}f\geq\lambda\right)\\
 & \leq c_{s}\sum_{r=0}^{\infty}\sum_{\substack{n\in\N\cup\{0\}\\
1\leq2^{n}\leq2k^{r}
}
}\left(\frac{2^{n}}{k^{r}}\right)^{\frac{1}{2s'}}2^{n}{M_{s}^\circ}w\left(|f|\geq2^{n-1}\lambda\right)\\
 & =c_{s}\sum_{x\in T}\sum_{n=0}^{\infty}\left(\sum_{\substack{r\in\N\cup\{0\}\\
k^{r}\geq2^{n-1}
}
}\frac{1}{k^{\frac{r}{2s'}}}\right)2^{n+\frac{n}{2s'}}\chi_{\{|f(x)|\geq2^{n-1}\lambda\}}(x){M_{s}^\circ}w(x)\\
 & \lesssim c_{s}\sum_{x\in T}\sum_{n=0}^{\infty}2^{n}\chi_{\{|f(x)|\geq2^{n-1}\lambda\}}{M_{s}^\circ}w(x)\lesssim c_{s}\sum_{x\in T}\frac{1}{\lambda}|f(x)|{M_{s}^\circ}w(x).
\end{split}
\]
Hence (\ref{treeweak}) holds and the proof of Theorem \ref{Thm:FS}
is complete. 
\end{proof}

\section{Examples of non trivial weights and the failure of the classical
Fefferman-Stein estimate\label{sec:RW}}

\subsection{Radial weights}

A natural way to define a radial weight on {the infinite rooted} $k$-ary tree is the
following. Let us consider 
\[
T=\bigcup_{j=0}^{\infty}T^{j}
\]
where $T{^0}$ {is the set whose only element is} the root of the tree, $T{^1}$ {is the set of vertices that are} children
of the root, and analogously $T^{j}$ is the set of vertices that
are children of vertices in $T{^{j-1}}$. Given this splitting, a radial
weight can be defined as follows
\[
w(x)=\sum_{j}c_{j}\chi_{T^{j}}(x)\qquad c_{j}\geq0.
\]
A natural question is trying to find choices of $c_{j}$ such that
\[
Mw(x)\lesssim w(x).
\]
First of all, note that since $Mw(x)\simeq M^{\circ}w(x)$ it suffices
to study the estimate for the latter. The problem would be to prove
\[
\frac{1}{|S(x,r)|}\sum_{y\in S(x,r)}w(y)\leq\kappa w(x)
\]
for some $\kappa>0$ uniformly on $r$ and on $x$. First note that
$|S(x,r)|\simeq k^{r}$. Now arguing as Naor and Tao{~\cite{NT}} we note that
given $x\in T^{j}$ a vertex at distance exactly $r$ belongs to $T^{i}$
if and only if $i=j+r-2m$ where $m\in\{0,\dots r\}$ and there are
exactly $k^{r-m}$ of such vertices. Hence if $x\in T^{j}$
\[
{\frac{1}{|S(x,r)|}\sum_{y\in S(x,r)}w(y)\simeq{}}\frac{1}{k^{r}}\sum_{y\in S(x,r)}w(y)=\frac{1}{k^{r}}\sum_{m=0}^{r}c_{i}k^{r-m}=\frac{1}{k^{r}}\sum_{m=0}^{r}c_{j+r-2m}k^{r-m}{.}
\]
Given the fact that spheres in this tree grow exponentially, a first
natural choice could be studying the behaviour in the case 
\[
c_{j}=k^{j\beta}.
\]
We shall call 
\[
w_{\beta}(x)=\sum_{j}k^{j\beta}\chi_{T^{j}}(x){.}
\]
Note that 
\[
\begin{split}\frac{1}{k^{r}}\sum_{m=0}^{r}c_{j+r-2m}k^{r-m} & =\frac{1}{k^{r}}\sum_{m=0}^{r}k^{(j+r-2m)\beta}k^{r-m}=k^{j\beta}k^{r\beta}\sum_{m=0}^{r}k^{m(-2\beta-1)}\end{split}{.}
\]
Hence we would need to show that
\[
k^{r\beta}\sum_{m=0}^{r}k^{m(-2\beta-1)}\leq c_{\beta}\]
uniformly on $r$. Note that if 
\[
-2\beta-1>0\iff\beta<-\frac{1}{2}
\]
we have that 
\[
k^{r\beta}\sum_{m=0}^{r}k^{m(-2\beta-1)}\simeq c_{\beta}k^{r\beta+r(-2\beta-1)}=c_{\beta}k^{(-\beta-1)r}{.}
\]
Hence if $\beta\in\left[-1,-\frac{1}{2}\right)${,} $k^{r\beta}\sum_{m=0}^{r}k^{m(-2\beta-1)}\leq c_{\beta}$.
If $-\frac{1}{2}\leq\beta<0$ we have that 
\[
k^{r\beta}\sum_{m=0}^{r}k^{m(-2\beta-1)}\leq k^{r\beta}\sum_{m=0}^{r}k^{m(-2\beta-1)}\leq k^{r\beta}r\leq2^{r\beta}rc_{\beta}.
\]
The case $\beta=0$ is trivial, since it corresponds with having no
weight. 

In the remainder of the cases, namely if $\beta\in\mathbb{R\setminus}[-1,0]$,
the claimed uniform estimate is not available. 

The discusion above can be summarized in the following theorem
\begin{thm}
The radial weight 
\[
w_{\beta}(x)=\sum_{j}k^{j\beta}\chi_{T^{j}}(x)
\]
satisfies $Mw\simeq w$ iff $\beta\in\left[-1,0\right]$. 
\end{thm}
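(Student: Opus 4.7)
The plan is to reduce everything to the spherical maximal operator $M^\circ$ via Proposition~\ref{prop:MMcirc} and then carry out a one-parameter geometric-series analysis of the expression already derived in the discussion preceding the statement. Since $w_\beta(x)=k^{j\beta}$ for $x\in T^j$, and trivially $M^\circ w_\beta(x)\geq w_\beta(x)$ (take $r=0$), the question reduces to characterizing when the ratio $M^\circ w_\beta(x)/w_\beta(x)$ is uniformly bounded in $x$. From the computation preceding the statement, for $x\in T^j$ this ratio is comparable, uniformly in $j$, to
\[
\sup_{r\geq 0}S_r(\beta),\qquad S_r(\beta):=k^{r\beta}\sum_{m=0}^{r}k^{m(-2\beta-1)},
\]
a quantity that is independent of $j$, so the theorem amounts to deciding for which $\beta\in\R$ one has $\sup_{r\geq 0}S_r(\beta)<\infty$.

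Next I would split into three regimes according to the sign of the geometric-series ratio $-2\beta-1$. If $\beta>-1/2$, the inner series is uniformly bounded by a constant and $S_r(\beta)\simeq k^{r\beta}$, which is bounded in $r$ if and only if $\beta\leq 0$. If $\beta=-1/2$, the inner sum equals $r+1$, giving $S_r(\beta)=(r+1)k^{-r/2}$, which is bounded (in fact decays) since $k\geq 2$. If $\beta<-1/2$, the geometric sum is comparable to its largest term $k^{r(-2\beta-1)}$, so $S_r(\beta)\simeq k^{-(\beta+1)r}$, bounded in $r$ if and only if $\beta\geq-1$. Combining the three regimes yields precisely $\{\beta\in\R:\sup_r S_r(\beta)<\infty\}=[-1,0]$, which proves the \emph{if} direction.

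For the \emph{only if} direction, the same case analysis produces, for each $\beta\notin[-1,0]$, an explicit divergence: $S_r(\beta)\simeq k^{r\beta}\to\infty$ when $\beta>0$, and $S_r(\beta)\simeq k^{-(\beta+1)r}\to\infty$ when $\beta<-1$. Since $S_r(\beta)$ does not depend on the generation of $x$, one can pick any vertex deep enough (so that $r\leq j$ and the spherical averages are not truncated by the presence of the root) and let $r\to\infty$ to conclude that $M^\circ w_\beta/w_\beta$ is unbounded on $T$, and consequently $Mw_\beta\not\simeq w_\beta$. I do not anticipate a serious obstacle: the argument is essentially bookkeeping for a geometric series. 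The only points requiring mild care are the boundary value $\beta=-1/2$, where the geometric series degenerates and one must check that the polynomial prefactor $r+1$ is dominated by the exponential decay $k^{-r/2}$ (immediate for $k\geq 2$), and the verification that the spherical computation is valid for sufficiently deep vertices so that the truncation at the root does not affect the conclusion.
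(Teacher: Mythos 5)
Your proposal is correct and follows essentially the same route as the paper: reduce to $M^\circ$ via Proposition~\ref{prop:MMcirc}, use the spherical computation for a generic vertex at depth $j$, and analyze the level-independent quantity $k^{r\beta}\sum_{m=0}^{r}k^{m(-2\beta-1)}$ by a geometric-series case split on the sign of $-2\beta-1$. Your version is slightly tidier in separating $\beta=-1/2$ from $\beta>-1/2$, and you explicitly flag (and correctly dispose of) the truncation-at-the-root issue that the paper leaves implicit; otherwise the two arguments coincide.
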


\begin{rem}
Note that if $\beta\in(-1,0]$ by the argument above there exists
$s_{\beta}>1$ such that 
\[
M_{s_{\beta}}w_{\beta}(x)\lesssim w_{\beta}(x)
\]
and $w_{\beta}$ satisfies the {assumptions of} Corollary \ref{cor:weak11}.
\end{rem}

\begin{rem}
The argument used above proves as well that in the case $\beta=-1$
the inequality 
\[
M_{\gamma}w_{-1}(x)\lesssim w_{-1}(x)
\]
does not hold for any $\gamma>1$. Indeed, if $\gamma>1$ we have
that 
\[
\frac{1}{|S(x,r)|}\int_{S(x,r)}w_{-1}(x)^{\gamma}dx\simeq\frac{1}{k^{r}}\sum_{m=0}^{r}k^{-(j+r-2m)\gamma}{k^{r-m}}\simeq k^{-j\gamma}c_{\gamma}k^{(\gamma-1)r}
\]
and on the other hand 
\[
\left(\frac{1}{|S(x,r)|}\int_{S(x,r)}w_{-1}(x)dx\right)^{\gamma}\simeq\left(\frac{1}{k^{r}}\sum_{m=0}^{r}k^{-(j+r-2m)}{k^{r-m}}\right)^{\gamma}\simeq k^{-j\gamma}.
\]
\end{rem}

\subsection{The classical Fefferman-Stein {estimate} does not hold}

In this section we give our proof of Theorem (\ref{thm:NotFS}). Let
$w(x)=\sum_{j=0}^{\infty}\frac{1}{k^{j}}\chi_{T^{j}}(x)$ . As we
showed in the preceding section, for this weight we know that 
\[
M^{n}w\simeq_{n}w.
\]
Let
\[
f_{j}(x)=3\chi_{T^{j}}(x){.}
\]
First we observe that, since $\int_{T}f_{j}(x)w(x){dx}=3${,}
\begin{equation}
\int_{T}f_{j}(x)M^{n}w(x){dx}{{}\simeq{}}c_{n}.\label{eq:fj}
\end{equation}
On the other hand, if $x\in T^{i}$ for $i\leq j$,
\[
\begin{split}M^{\circ}f_{j}(x) & =\sup_{r{\geq0}}\frac{1}{|S(x,r)|}\sum_{y\in B(x,r)}f_{j}(x)\\
 & =3\sup_{r{\geq0}}\frac{|T^{j}\cap B(x,r)|}{|S(x,r)|}\\
 & \geq3\frac{|T^{j}\cap B(x,{j-i})|}{|S(x,{j-i})|}\geq3\frac{k^{{j-i}}}{2k^{{j-i}}}=\frac{3}{2}>1
\end{split}
\]
which in turn implies that
\[
\bigcup_{i=0}^{j}T^{i}\subset\left\{ M^{\circ}f_{j}(x)>1\right\} .
\]
Hence, since $w(T^{i})=1$ for every $0\leq i\leq j$, we have that
\begin{equation}
j\leq\sum_{i=0}^{j}w(T^{i})=w\left(\bigcup_{i=0}^{j}T^{i}\right)\leq w\left(\left\{ M^{\circ}f_{j}(x)>1\right\} \right){.}\label{eq:Levelj}
\end{equation}
The desired conclusion readily follows combining (\ref{eq:fj}) and
(\ref{eq:Levelj}).

\subsection{$Mw\lesssim w$ is necessary}

We end up this section settling the following result.
\begin{thm}
Assume that for a weight $w$ the following estimate holds 
\[
w\left(\left\{ x\in T\,:\,M^{\circ}f(x)>\lambda\right\} \right)\leq c_{w}\frac{1}{\lambda}\sum_{{x}\in T}|f(x)|w(x){.}
\]
{Then} $Mw\lesssim w$.
\end{thm}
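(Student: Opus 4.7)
The plan is to test the hypothesis against the simplest possible inputs, namely the point masses $f=\chi_{\{x_{0}\}}$ at single vertices $x_{0}\in T$, and to extract from the resulting estimate a pointwise bound $M^{\circ}w(x_{0})\lesssim w(x_{0})$; by Proposition~\ref{prop:MMcirc} this will be equivalent to $Mw(x_{0})\lesssim w(x_{0})$, which is what we want.

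To carry this out, fix $x_{0}\in T$ and an integer $r\geq 0$, and take $f=\chi_{\{x_{0}\}}$, so that $\sum_{x\in T}|f(x)|w(x)=w(x_{0})$. Every $y\in S(x_{0},r)$ satisfies $x_{0}\in S(y,r)$, and therefore
\[
M^{\circ}f(y)\geq\frac{1}{|S(y,r)|}.
\]
A direct count will show that $k^{r}\leq|S(x,r)|\leq k^{r}+k^{r-1}\leq\frac{3}{2}k^{r}$ for every $x\in T_{k}$ (with $k\geq 2$) and every $r\geq 1$, so $|S(x,r)|$ depends on $x$ only up to a factor of $3/2$. In particular $|S(y,r)|\leq\frac{3}{2}|S(x_{0},r)|$, and hence $M^{\circ}f(y)>\frac{1}{2|S(x_{0},r)|}$ for every $y\in S(x_{0},r)$. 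Setting $\lambda=\frac{1}{2|S(x_{0},r)|}$ and applying the assumed weak-type inequality will then give
\[
w(S(x_{0},r))\leq w(\{M^{\circ}f>\lambda\})\leq\frac{c_{w}}{\lambda}w(x_{0})=2c_{w}|S(x_{0},r)|w(x_{0}).
\]
After dividing by $|S(x_{0},r)|$ we obtain $A_{r}^{\circ}w(x_{0})\leq 2c_{w}w(x_{0})$, and taking the supremum over $r\geq 0$ produces $M^{\circ}w(x_{0})\lesssim w(x_{0})$.

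The only delicate ingredient is the uniform comparability $|S(x,r)|\simeq k^{r}$: it must be checked with some care because the exact value of $|S(x,r)|$ does depend on the depth of $x$, since once $r$ exceeds that depth the contributions from ancestors and their sideways subtrees get truncated. The explicit formulas $|S(x,r)|=k^{r}+k^{r-1}$ when $r$ is at most the depth of $x$, and $|S(x,r)|=k^{r}+k^{r-1}-k^{r-\mathrm{depth}(x)-1}$ otherwise (both obtained by classifying the paths from $x$ according to the number of up-steps taken before going down), confirm the two-sided bound in every case and close the argument.
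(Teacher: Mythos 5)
Your proposal is correct and follows essentially the same route as the paper: test the weak-type hypothesis on a Dirac mass $f=\chi_{\{x_{0}\}}$, observe that $S(x_{0},r)$ lies in the superlevel set $\{M^{\circ}f>\lambda\}$ for a threshold $\lambda$ comparable to $1/|S(x_{0},r)|$, and then divide through by $|S(x_{0},r)|$ to read off $A_{r}^{\circ}w(x_{0})\lesssim w(x_{0})$. The only distinction is that you pin down the two-sided bound $k^{r}\leq|S(x,r)|\leq k^{r}+k^{r-1}$ explicitly, whereas the paper simply uses $|S(x,s)|\simeq k^{s}$; this is a cosmetic refinement, not a change of method.
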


\begin{proof}
Let us fix $x_{0}\in T$ and $r>0$. We are going to show that 
\[
\frac{1}{k^{r}}w(S(x_{0},r))\lesssim w(x_{0}).
\]
We begin noting that 
\[
S(x_{0},r)\subset\left\{ x\in T\,:\,M^{\circ}(\delta_{x_{0}})>\frac{1}{2k^{r}}\right\} 
\]
Indeed, note that if $x\in S(x_{0},r)$ {then}
\[
M^{\circ}(\delta_{x_{0}})(x){{}\simeq{}}\sup_{s{\geq}0}\frac{1}{k^{s}}\sum_{z\in S(x,s)}\delta_{x_{0}}(z)=\frac{1}{k^{r}}.
\]
{Hence}
\[
w(S(x_{0},r))\leq w\left(\left\{ x\in T\,:\,M^{\circ}(\delta_{x_{0}})>\frac{1}{2k^{r}}\right\} \right)\leq c_{w}2k^{r}\sum_{x\in T}\delta_{x_{0}}(x)w(x)
\]
and consequently
\[
\frac{w(S(x_{0},r))}{k^{r}}\leq2c_{w}w(x_{0})
\]
which readily implies that 
\[
M^{\circ}w(x)\lesssim w(x)\qquad\text{{for all }}x\in T.\qedhere
\]
\end{proof}

\section{\label{sec:VectorValuedExtensions}Vector valued extensions}

An interesting application that Fefferman and Stein found in \cite{FS1971}
for their two weights estimate were bounds for the following vector
valued extensions
\[
\left(\sum_{j=1}^{\infty}M(f_{j}){}^{q}\right)^{\frac{1}{q}}\qquad1<q<\infty.
\]
Those estimates, besides being an extension of the maximal function,
were a generalization of the Marcinkiewicz operator that consists
in choosing each $f_{j}$ to be a suitable characteristic function. 

In our case we will also be able to provide some vector valued extensions.
First we provide $L^{p}$ versions of our endpoint Fefferman-Stein
estimate that are a direct consequence of the fact that 
\[
\|Mf\|_{L^{\infty}(w)}\lesssim\|f\|_{L^{\infty}(M_{s}w)}
\]
combined with Theorem \ref{Thm:FS} and the Marcinkiewicz interpolation
theorem.
\begin{thm}
Let $1<p,s<\infty$ and $w$ a weight. Then
\begin{equation}
\left(\int_{T}\left(Mf\right)^{p}wdx\right)^{\frac{1}{p}}\leq c_{s}\left(\int_{T}\left|f\right|^{p}M_{s}wdx\right)^{\frac{1}{p}}\label{eq:FSp}
\end{equation}
where $c_{s}\rightarrow\infty$ when $s\rightarrow1$.
\end{thm}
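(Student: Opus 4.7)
My plan is to follow precisely the hint recorded in the sentence preceding the statement: combine the weak-type endpoint of Theorem~\ref{Thm:FS} with the trivial $L^{\infty}$ endpoint and invoke the Marcinkiewicz interpolation theorem for sublinear operators between Lebesgue spaces built on possibly different measures.

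First I will verify the $L^{\infty}$ endpoint $\|Mf\|_{L^{\infty}(w)}\leq \|f\|_{L^{\infty}(M_{s}w)}$. Assuming $w\not\equiv 0$ (the inequality is trivial otherwise), for every vertex $y\in T$ one may choose $r$ so large that $B(y,r)\cap \supp w\neq\emptyset$; this forces $M_{s}w(y)>0$ for every $y$, and consequently $\|f\|_{L^{\infty}(M_{s}w)}=\|f\|_{\infty}$. The pointwise inequality $Mf\leq \|f\|_{\infty}$ then delivers this endpoint with constant $1$.

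Next, Theorem~\ref{Thm:FS} supplies the other endpoint: $M$ sends $L^{1}(M_{s}w\,dx)$ into $L^{1,\infty}(w\,dx)$ with norm controlled by $c_{s}$. With both endpoints in hand I will apply the Marcinkiewicz interpolation theorem for sublinear operators on $\sigma$-finite measure spaces, with domain measure $M_{s}w\,dx$, target measure $w\,dx$, and exponents $p_{0}=1$, $p_{1}=\infty$. This yields the strong-type $(p,p)$ bound (\ref{eq:FSp}) for every $1<p<\infty$, with an interpolation constant of the usual form $c_{p,s}\simeq C(p)\,c_{s}^{1/p}$ that depends only on $p$ and on the two endpoint constants. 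Since the $L^{\infty}$ endpoint has constant $1$ while the weak $(1,1)$ endpoint constant blows up as $s\to 1^{+}$, the resulting constant $c_{p,s}$ also blows up, exactly as claimed.

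I do not foresee any genuine obstacle here. The only point deserving a moment of care is to cite a version of Marcinkiewicz interpolation that tolerates distinct measures on the input and output sides, which is completely standard and uses neither doubling, nor regularity, nor any geometric feature of $T$; hence the argument transfers to the infinite rooted $k$-ary tree without modification.
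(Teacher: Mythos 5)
Your argument is exactly the one the paper sketches: combine the weak $(1,1)$ estimate of Theorem~\ref{Thm:FS} with the trivial $L^{\infty}$ bound $\|Mf\|_{L^{\infty}(w)}\lesssim\|f\|_{L^{\infty}(M_{s}w)}$ and apply Marcinkiewicz interpolation between the two measures $M_{s}w\,dx$ and $w\,dx$. The paper leaves the verification implicit, while you have merely filled in the routine details (checking the $L^{\infty}$ endpoint and noting the two-measure version of Marcinkiewicz), so this is the same proof.
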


At this point we would like to note that this Theorem can be regarded
as a generalization of Nevo and Stein \cite{NS} where the case $w=1$
was essentially settled.

In our next Theorem we provide some vector valued extensions following
classical ideas in~\cite{FS1971}.
\begin{thm}
Let $1<q\leq p<\infty${. Then} 
\[
\left\Vert \left(\sum_{j=1}^{\infty}M(f_{j}){}^{q}\right)^{\frac{1}{q}}\right\Vert _{L^{p}(T)}\leq c_{p,q}\left\Vert \left(\sum_{j=1}^{\infty}|f_{j}|^{q}\right)^{\frac{1}{q}}\right\Vert _{L^{p}(T)}.
\]
\end{thm}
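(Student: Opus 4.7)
The plan is to follow the classical Fefferman–Stein scheme based on $L^{p/q}$ duality, using the weighted strong-type inequality~(\ref{eq:FSp}) that was proved just above as the main analytic input.

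I would first dispose of the diagonal case $p=q$. Taking $w\equiv 1$ in~(\ref{eq:FSp}) one has $M_s 1=1$, so $M$ is bounded on $L^q(T)$. Interchanging sum and integral by Fubini/Tonelli,
\[
\left\Vert\Bigl(\sum_j M(f_j)^q\Bigr)^{1/q}\right\Vert_{L^q(T)}^q
=\sum_j\int_T M(f_j)^q\,dx
\leq C^q\sum_j\int_T|f_j|^q\,dx
=C^q\left\Vert\Bigl(\sum_j|f_j|^q\Bigr)^{1/q}\right\Vert_{L^q(T)}^q,
\]
which is the desired estimate at $p=q$.

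For $p>q$ the argument proceeds by duality in $L^{p/q}(T)$. Since $p/q>1$, it suffices to show that
\[
\int_T\Bigl(\sum_j M(f_j)^q\Bigr)\,g\,dx\leq c_{p,q}\left\Vert\sum_j|f_j|^q\right\Vert_{L^{p/q}(T)}
\]
for every nonnegative $g\in L^{(p/q)'}(T)$ of unit norm. Viewing $g$ as a weight and applying~(\ref{eq:FSp}) with exponent $q$ in place of $p$ to each $f_j$, then summing in $j$, one gets
\[
\int_T\Bigl(\sum_j M(f_j)^q\Bigr)\,g\,dx
\leq c_s^q\int_T\Bigl(\sum_j|f_j|^q\Bigr)M_s g\,dx
\leq c_s^q\left\Vert\sum_j|f_j|^q\right\Vert_{L^{p/q}(T)}\|M_s g\|_{L^{(p/q)'}(T)}
\]
by Hölder's inequality.

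It then remains to bound $\|M_sg\|_{L^{(p/q)'}(T)}$ by $\|g\|_{L^{(p/q)'}(T)}$. Since $p>q$, the exponent $(p/q)'$ is finite and strictly greater than $1$, so one can choose $s>1$ with $s<(p/q)'$. For such $s$, setting $r=(p/q)'/s>1$,
\[
\|M_s g\|_{L^{(p/q)'}(T)}^{(p/q)'}=\int_T M(g^s)^{r}\,dx\leq C_r\int_T g^{s r}\,dx=C_r\|g\|_{L^{(p/q)'}(T)}^{(p/q)'},
\]
where the strong $L^r$ boundedness of $M$ is again a consequence of~(\ref{eq:FSp}) (with $w\equiv 1$) or, alternatively, of the Nevo–Stein result. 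Taking the supremum over admissible $g$ and extracting the $q$-th root yields the claimed inequality with $c_{p,q}=c_s\,C_r^{1/(q(p/q)')}$. The only delicate point is the legitimacy of choosing the auxiliary parameter $s$: this is where the restriction $p>q$ is used, and it is the reason why the diagonal case $p=q$ must be handled separately.
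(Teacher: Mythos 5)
Your proof is correct and follows essentially the same route as the paper: handle $p=q$ directly from the unweighted $L^q$ boundedness of $M$, and for $p>q$ dualize in $L^{p/q}$, apply the weighted strong-type estimate~(\ref{eq:FSp}) with exponent $q$ and weight $g$, then control $\|M_sg\|_{L^{(p/q)'}}$ by choosing $1<s<(p/q)'$. The only difference is that you spell out the last step (writing $(M_sg)^{(p/q)'}=M(g^s)^r$ with $r=(p/q)'/s>1$ and invoking $L^r$-boundedness), whereas the paper simply asserts it.
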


\begin{proof}
If $p=q$ the proof is straightforward, hence we omit it. For the
case $q<p$ we argue by duality. 
\[
\left\Vert \left(\sum_{j=1}^{\infty}M(f_{j}){}^{q}\right)^{\frac{1}{q}}\right\Vert _{L^{p}(T)}^{q}=\sup_{\|g\|_{L^{\left(\frac{p}{q}\right)'}(T)}=1}\left|\int_{T}\sum_{j=1}^{\infty}M(f_{j}){}^{q}gdx\right|.
\]
Note that using~\eqref{eq:FSp}
\[
\begin{split}\left|\int_{T}\sum_{j=1}^{\infty}M(f_{j}){}^{q}gdx\right| & \leq\sum_{j=1}^{\infty}\int_{T}M(f_{j}){}^{q}\left|g\right|dx\\
 & \leq c_{s}^{q}\int_{T}\sum_{j=1}^{\infty}|f_{j}|^{q}M_{s}(g)dx\\
 & \leq c_{s}^{q}\left\Vert \left(\sum_{j=1}^{\infty}|f_{j}|^{q}\right)^{\frac{1}{q}}\right\Vert _{L^{p}(T)}^{q}\left\Vert M_{s}g\right\Vert _{L^{\left(\frac{p}{q}\right)'}(T)}.
\end{split}
\]
Now choosing $s<\left(\frac{p}{q}\right)'$ we have that 
\[
\begin{split} & \left\Vert \left(\sum_{j=1}^{\infty}|f_{j}|^{q}\right)^{\frac{1}{q}}\right\Vert _{L^{p}(T)}^{q}\left\Vert M_{s}g\right\Vert _{L^{\left(\frac{p}{q}\right)'}(T)}\\
 & \leq c_{\left(\frac{p}{q}\right)'}\left\Vert \left(\sum_{j=1}^{\infty}|f_{j}|^{q}\right)^{\frac{1}{q}}\right\Vert _{L^{p}(T)}^{q}\|g\|_{L^{\left(\frac{p}{q}\right)'}(T)}.
\end{split}
\]
Combining the estimates above we are done.
\end{proof}

\appendix

\section{A weighted version of Soria-Tradacete result for infinite trees}

In this appendix we provide a weighted {version of} \cite[Theorem 4.1]{STInf}.
Analogously {to} the case of the {infinite rooted} $k$-ary tree the spherical maximal
function on {any} tree $T$ can be defined as follows{:}
\[
M^{\circ}f(x)=\sup_{r\geq0}\frac{1}{|S(x,r)|}\sum_{y\in S(x,r)}|f(y)|,
\]
where $S(x,r)$ is the sphere 
\[
S(x,r)=\{y\in T:d(x,y)=r\}.
\]

Let ${\alpha}\in(0,1)$ and $s\in(1,\infty)$. We define 

\[
\mathcal{E}_{T}^{w}(s,r,\alpha)=\sup_{\stackrel{{\scriptstyle E,F\subset G}}{|E|,|F|<\infty}}\frac{1}{w(F)^{\alpha}M_{s}^{\circ}w(E)^{1-\alpha}}\sum_{x\in E}\frac{w\left(F\cap S(x,r)\right)}{|S(x,r)|}
\]
and
\[
S_{T}(r)=\sup_{x\in T}|S(x,r)|.
\]
With {these} quantities at our disposal we are ready to settle our weighted
version of \cite[Theorem 4.1]{STInf}.
\begin{thm}
\label{Thm:weak11abs}For every weight $w$ on {a tree} $T$ we have that 
\[
w\left(\left\{ x\in T\,:\,M^{\circ}f(x)>\lambda\right\} \right)\lesssim\Gamma_{T,w,r,\alpha,s}\frac{1}{\lambda}\int_{T}|f(x)|M_{s}^{\circ}w(x)dx
\]
{where}
\[
\Gamma_{T,w,r,\alpha,s}=c_{\alpha}\sup_{n\in\mathbb{N}}\left\{ \sum_{S_{T}(r)\geq2^{n-1}}^{\infty}\mathcal{E}_{T}^{w}(s,r,\alpha)^{\frac{1}{1-\alpha}}S_{T}(r)^{\frac{1}{2}\frac{\alpha}{1-\alpha}}2^{n\frac{1}{2}\frac{\alpha}{1-\alpha}}\right\} 
\]
{and} $c_{\alpha}\rightarrow+\infty$ when $\alpha\rightarrow0$. 
\end{thm}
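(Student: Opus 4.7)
The plan is to adapt the argument of Theorem~\ref{Thm:FS}, with the abstract quantity $\mathcal{E}_T^w(s,r,\alpha)$ playing the role of Lemma~\ref{lem:Borders} and $S_T(r)$ replacing $k^r$. After normalizing to $\lambda=1$ and $f\geq 0$ and invoking $w(\{M^\circ f\geq 1\})\leq \sum_{r\geq 0}w(\{A_r^\circ f\geq 1\})$, I would, for each fixed $r$, mimic the proof of Lemma~\ref{lem:sumLevels}. Specifically, decompose
\[
f\leq \tfrac{1}{2}+\sum_{n\,:\,1\leq 2^n\leq S_T(r)}2^n\chi_{E_n}+f\chi_{\{f\geq S_T(r)/2\}},\qquad E_n=\{2^{n-1}\leq f<2^n\},
\]
so that the tail piece contributes at most $S_T(r)\,M^\circ w(\{f\geq S_T(r)/2\})$ to $w(\{A_r^\circ f\geq 1\})$, exactly as in~\eqref{eq:TerminoFacil}, using that $|S(y,r)|\leq S_T(r)$.

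For the middle piece I would introduce thresholds $\theta_n\asymp 2^{-n}(2^n/S_T(r))^{\beta}$ with $\beta=\alpha/2$ (the analogue of the choice $\beta=\tfrac{1}{2(s'+1)}$ made in Lemma~\ref{lem:sumLevels}), ensuring $\sum_n 2^n\theta_n<1/2$. A pigeonholing as in the proof of Lemma~\ref{lem:sumLevels} then reduces matters to bounding $w(F_n)$ with $F_n=\{A_r^\circ(\chi_{E_n})\geq\theta_n\}$. The distributional definition of $F_n$ gives
\[
\theta_n\,w(F_n)\leq \sum_{y\in F_n}w(y)A_r^\circ(\chi_{E_n})(y),
\]
which, via the near self-adjointness of $A_r^\circ$ (comparing $|S(x,r)|$ and $|S(y,r)|$ whenever $d(x,y)=r$ through the uniform bound $S_T(r)$), is comparable to the quantity $\sum_{x\in E_n}w(F_n\cap S(x,r))/|S(x,r)|$ that $\mathcal{E}_T^w(s,r,\alpha)$ controls by definition. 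Solving the resulting inequality for $w(F_n)$ yields a bound of the form
\[
w(F_n)\lesssim \mathcal{E}_T^w(s,r,\alpha)^{\frac{1}{1-\alpha}}\,S_T(r)^{\frac{1}{2}\frac{\alpha}{1-\alpha}}\,2^{n\bigl(1+\frac{1}{2}\frac{\alpha}{1-\alpha}\bigr)}\,M_s^\circ w(E_n),
\]
with the tuning $\beta=\alpha/2$ precisely balancing the exponents of $S_T(r)$ and $2^n$ needed to match $\Gamma_{T,w,r,\alpha,s}$.

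Finally, summing over $r\geq 0$ under the constraint $S_T(r)\geq 2^{n-1}$ coming from the decomposition range (with the tail piece absorbed into the largest admissible $n$) and then over $n$, and applying the layer-cake identity $\sum_n 2^n M_s^\circ w(\{f\geq 2^{n-1}\})\asymp \int_T|f|M_s^\circ w$, delivers the asserted weak-type estimate with $\Gamma_{T,w,r,\alpha,s}$ as displayed.

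The main obstacle is the near self-adjointness step: in the $k$-ary tree of Theorem~\ref{Thm:FS}, $|S(x,r)|\simeq|S(y,r)|\simeq k^r$ whenever $d(x,y)=r$, so the symmetrization between $\sum_{y\in F_n}w(y)A_r^\circ(\chi_{E_n})(y)$ and $\sum_{x\in E_n}w(F_n\cap S(x,r))/|S(x,r)|$ is essentially free; in the general tree setting the cost of this exchange must be tracked carefully, and it is exactly this cost that is absorbed by the $S_T(r)^{\frac{1}{2}\frac{\alpha}{1-\alpha}}$ factor in $\Gamma_{T,w,r,\alpha,s}$. The blow-up $c_\alpha\to+\infty$ as $\alpha\to 0$ reflects, as in Lemma~\ref{lem:sumLevels}, the pigeonhole factor $1/(2^{\alpha/2}-1)$.
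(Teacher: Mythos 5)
Your argument follows the paper's proof of Theorem~\ref{Thm:weak11abs} (via Lemma~\ref{lem:sumLevelsabs}) essentially step for step: the same three-piece decomposition of $f$ cut at $S_T(r)/2$, the same tail bound $S_T(r)\,M^\circ w(\{f\geq S_T(r)/2\})$, the same pigeonholing with threshold $\asymp 2^{-n}(2^n/S_T(r))^\gamma$ and tuning $\gamma=\alpha/2$, the same application of $\mathcal{E}_T^w(s,r,\alpha)$ to the pair $(E_n,F_n)$, and the same final summation over $r$ and $n$. So this is the paper's route, not a new one.

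One attribution in your write-up is off, and it is worth correcting. The paper passes from $\int_{F_n}w\,A_r^\circ(\chi_{E_n})\,dy$ to $\sum_{x\in E_n}w(F_n\cap S(x,r))/|S(x,r)|$ by flatly invoking that $A_r^\circ$ is self-adjoint, incurring no cost whatsoever at that step; the factor $S_T(r)^{\frac{1}{2}\frac{\alpha}{1-\alpha}}$ in $\Gamma_{T,w,r,\alpha,s}$ is exactly $S_T(r)^{\gamma/(1-\alpha)}$ and is produced entirely by the pigeonhole exponent $\gamma=\alpha/2$, not by any symmetrization penalty. Your claim that this factor ``absorbs'' a near-self-adjointness cost is therefore not what happens in the proof, and it could not happen: a genuine exchange of $1/|S(y,r)|$ for $1/|S(x,r)|$ along pairs with $d(x,y)=r$ costs, a priori, a full factor of $S_T(r)$, which the small exponent $\tfrac{1}{2}\tfrac{\alpha}{1-\alpha}<1$ cannot pay for. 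In other words, at this step you would need to be more careful than the paper is, not less: either the tree must have $|S(\cdot,r)|$ (approximately) constant across any pair at distance $r$ so that the adjointness is (essentially) exact, or the functional $\mathcal{E}_T^w$ must be read as already being matched to the correct bilinear form; it is not a cost that silently disappears into the displayed $\Gamma$.
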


 Again, as we did for the {infinite rooted} $k$-ary tree, given any infinite tree
$T$ we can define the average operator over the tree $T$ as
\[
A_{r}^{\circ}f(x)=\frac{1}{|S(x,r)|}\sum_{y\in S(x,r)}|f(y)|{.}
\]
Our next lemma contains the key estimate required to settle Theorem
\ref{Thm:weak11abs}.
\begin{lem}
\label{lem:sumLevelsabs} Let $r,s>0$ and $\lambda>0$. Then 
\[
w\left(\left\{ A_{r}^{\circ}f\geq\lambda\right\} \right)\lesssim c_{\alpha}\sum_{n=0}^{n(r)}2^{\frac{n}{2}\frac{\alpha}{1-\alpha}}\mathcal{E}_{T}^{w}(s,r,\alpha){}^{\frac{1}{1-\alpha}}S_{T}(r)^{\frac{1}{2}\frac{\alpha}{1-\alpha}}2^{n}M_{s}^{\circ}w\left(\left\{ |f|\geq2^{n-1}\lambda\right\} \right)
\]
where {$n(r)$ is an integer such that} $2^{n(r)}\leq S_{T}(r){{}<{}}2^{n(r)+1}$ and $c_{\alpha}\rightarrow+\infty$
when $\alpha\rightarrow0$. 
\end{lem}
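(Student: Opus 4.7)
The plan is to imitate the proof of Lemma~\ref{lem:sumLevels}, replacing the explicit bound coming from Lemma~\ref{lem:Borders} by the very definition of $\mathcal{E}_T^w(s,r,\alpha)$, and replacing the uniform sphere size $|S(x,r)|\simeq k^r$ available on the rooted $k$-ary tree by the pointwise bound $|S(x,r)|\le S_T(r)$. As in the $k$-ary case I would assume without loss of generality that $f\ge 0$ and $\lambda=1$, and split
\[
f\le\tfrac12+\sum_{1\le 2^n\le S_T(r)}2^n\chi_{E_n}+f\chi_{\{f\ge S_T(r)/2\}},\qquad E_n=\{2^{n-1}\le f<2^n\}.
\]
Averaging each term under $A_r^\circ$ and decomposing $\{A_r^\circ f\ge 1\}$ into a ``tail'' piece and a ``main'' piece, the tail
\[
w\bigl(\{A_r^\circ(f\chi_{\{f\ge S_T(r)/2\}})\neq 0\}\bigr)\le S_T(r)\,M^\circ w\bigl(\{f\ge S_T(r)/2\}\bigr)
\]
is controlled exactly as in~\eqref{eq:TerminoFacil} and, since $2^{n(r)}\simeq S_T(r)$, it is absorbed into the $n=n(r)$ summand of the conclusion.

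For the main piece, the geometric-series pigeonhole used in the proof of Lemma~\ref{lem:sumLevels} still applies (it makes no use of the rooted $k$-ary structure beyond monotonicity in the radius), and produces, for some $n\in\{0,\ldots,n(r)\}$ and a parameter $\beta\in(0,1)$ to be chosen later, a level set
\[
F_n=\bigl\{A_r^\circ\chi_{E_n}\ge\theta_n\bigr\},\qquad \theta_n=\frac{2^\beta-1}{2^{n+2}}\Bigl(\frac{2^n}{S_T(r)}\Bigr)^{\beta},
\]
together with the obvious lower bound $\theta_n w(F_n)\le\int_{F_n}wA_r^\circ(\chi_{E_n})$. The main obstacle is bounding this last integral by the quantity $\mathcal{E}_T^w(s,r,\alpha)$. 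On the rooted $k$-ary tree this was achieved via the approximate self-adjointness of $A_r^\circ$, which in turn relied on $|S(x,r)|$ being comparable to $k^r$ \emph{uniformly} in $x$; this uniformity is lost on a general tree. To compensate, I would pass to the symmetric quantity $\1\otimes w$ and use $|S(y,r)|\ge 1$ for $y\in F_n$ together with $|S(x,r)|\le S_T(r)$ for $x\in E_n$ to obtain
\[
\int_{F_n}wA_r^\circ(\chi_{E_n})\le\1\otimes w\bigl(\{(x,y)\in E_n\times F_n:d(x,y)=r\}\bigr)\le S_T(r)\sum_{x\in E_n}\frac{w(F_n\cap S(x,r))}{|S(x,r)|},
\]
and then invoke the definition of $\mathcal{E}_T^w(s,r,\alpha)$ to dominate the right-hand side by $S_T(r)\,\mathcal{E}_T^w(s,r,\alpha)\,w(F_n)^{\alpha}M_s^\circ w(E_n)^{1-\alpha}$.

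Combining both ends yields
\[
w(F_n)\le\Bigl(\frac{S_T(r)\mathcal{E}_T^w(s,r,\alpha)}{\theta_n}\Bigr)^{1/(1-\alpha)}M_s^\circ w(E_n),
\]
and a routine computation shows that choosing $\beta$ as a function of $\alpha$, in the same spirit as the choice $\beta=\tfrac{1}{2(s'+1)}$ made in the proof of Lemma~\ref{lem:sumLevels}, balances the resulting powers of $2^n$ and of $S_T(r)$ and delivers the bound claimed in the lemma, with the constant $c_\alpha$ coming from the factor $(2^\beta-1)^{-1/(1-\alpha)}$, which explains its blow-up as $\alpha\to 0$. Summing over $n\in\{0,\ldots,n(r)\}$ and adding back the tail term concludes the argument.
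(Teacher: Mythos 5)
Your overall scheme is the same as the paper's: the decomposition of $f$ into subdyadic levels $E_n$, the treatment of the tail $f\chi_{\{f\geq S_T(r)/2\}}$, and the pigeonhole argument producing a level set $F_n$ on which $A_r^\circ\chi_{E_n}$ is bounded below. However, there is a concrete and fatal gap in how you treat the key step that feeds $F_n$ and $E_n$ into $\mathcal{E}_T^w(s,r,\alpha)$.

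At the decisive moment, the paper simply writes
\[
\int_{F_n}w\,A_r^\circ(\chi_{E_n})\;=\;\int_{E_n}A_r^\circ(w\chi_{F_n})\;=\;\sum_{x\in E_n}\frac{w(F_n\cap S(x,r))}{|S(x,r)|}
\;\leq\;\mathcal{E}_T^w(s,r,\alpha)\,w(F_n)^{\alpha}M_s^\circ w(E_n)^{1-\alpha},
\]
invoking self-adjointness of $A_r^\circ$, and this produces \emph{exactly} the quantity controlled by $\mathcal{E}_T^w$ with no spurious factor. You, instead, go through $\1\otimes w$ and use the crude bounds $|S(y,r)|\geq 1$ on the $F_n$-side and $|S(x,r)|\leq S_T(r)$ on the $E_n$-side, which multiplies the right-hand side by $S_T(r)$. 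That extra $S_T(r)$ cannot be absorbed: solving $w(F_n)\leq\bigl(S_T(r)\mathcal{E}_T^w(s,r,\alpha)/\theta_n\bigr)^{1/(1-\alpha)}M_s^\circ w(E_n)$ with $\theta_n=\frac{2^\beta-1}{2^{n+2}}\bigl(2^n/S_T(r)\bigr)^\beta$ gives a power $S_T(r)^{(1+\beta)/(1-\alpha)}$, whereas the lemma requires $S_T(r)^{\frac12\frac{\alpha}{1-\alpha}}$. Matching the $2^n$-exponent forces $\beta=\alpha/2$ (just as in the $k$-ary case), but matching the $S_T(r)$-exponent would require $1+\beta=\alpha/2$, i.e.\ $\beta<0$. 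These two constraints are incompatible, so no choice of $\beta\in(0,1)$ ``delivers the bound claimed in the lemma'' as you assert; with $\beta=\alpha/2$ your estimate is worse than the lemma by a factor of order $S_T(r)^{1/(1-\alpha)}$, which in turn would wreck the summability over $r$ used to pass from $A_r^\circ$ to $M^\circ$ in Theorem~\ref{Thm:weak11abs}. So the ``routine computation'' you claim closes the argument does not exist.

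Your implicit worry about the self-adjointness of $A_r^\circ$ on a non-homogeneous tree is not unreasonable (the argument $\int_{F_n}wA_r^\circ(\chi_{E_n})=\int_{E_n}A_r^\circ(w\chi_{F_n})$ requires comparing $|S(x,r)|$ with $|S(y,r)|$ for $d(x,y)=r$, and on a general tree these need not agree), but the cure you propose is much worse than the disease: if you want to sidestep that step, you must find a bound that does not cost a full power of $S_T(r)$, for instance by rebalancing which of the two sphere cardinalities appears in the definition of $\mathcal{E}_T^w$, rather than replacing one of them by $1$ and the other by the global supremum.
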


\begin{proof}
We can assume without loss of generality $f$ to be non-negative {and $\lambda=1$}.
We bound 
\begin{equation}
f\leq\frac{1}{2}+\sum_{n=0}^{n(r)}2^{n}\chi_{E_{n}}+f\chi_{\{f\geq\frac{1}{2}{S_T(r)}\}},\label{eq:troceado-1}
\end{equation}
where $E_{n}$ is the sublevel set 
\begin{equation}
E_{n}=\left\{ 2^{n-1}\leq f<2^{n}\right\} .\label{eq:niveles-1}
\end{equation}
Hence 
\begin{equation}
A_{r}^{\circ}f\leq\frac{1}{2}+\sum_{n=0}^{n(r)}2^{n}A_{r}^{\circ}\left(\chi_{E_{n}}\right)+A_{r}^{\circ}\left(f\chi_{\{f\geq\frac{1}{2}{S_T(r)}\}}\right).\label{eq:troceadopromediado-1}
\end{equation}
First we note that
\begin{equation}
\begin{split}w\left(A_{r}^{\circ}\left(f\chi_{\{f\geq\frac{1}{2}{S_T(r)}\}}\right)\neq0\right) & \le w\left(\bigcup_{y\in\{f\geq\frac{1}{2}{S_T(r)}\}}{S}(y,r)\right)\\
 & \leq\sum_{y\in\{f\geq\frac{1}{2}{S_T(r)}\}}w({S}(y,r))\leq S_{T}(r)\sum_{y\in\{f\geq\frac{1}{2}{S_T(r)}\}}\frac{w(S(y,r))}{|S(y,r)|}\\
 & \leq S_{T}(r)M^\circ w\left(\left\{f\geq\frac{1}{2}{S_T(r)}\right\}\right)
\end{split}
\label{eq:TerminoFacil-1}
\end{equation}
Thus we have that combining the estimates above
\[
\begin{split} w\left(A_{r}^{\circ}f\geq1\right)
 & {{}\leq{}}w\left(\sum_{n=0}^{n(r)}2^{n}A_{r}^{\circ}\left(\chi_{E_{n}}\right)\geq\frac{1}{2}\right)+w\left(A_{r}^{\circ}\left(f\chi_{\{f\geq\frac{1}{2}{S_T(r)}\}}\right)\neq0\right)\\
 & \leq w\left(\sum_{n=0}^{n(r)}2^{n}A_{r}^{\circ}\left(\chi_{E_{n}}\right)\geq\frac{1}{2}\right)+S_{T}(r)M^\circ w\left(\left\{f\geq\frac{1}{2}{S_T(r)}\right\}\right)
\end{split}
\]
Let {$\gamma$ be a real parameter such that $0<\gamma<1$} to be chosen {later}. Note that if 
\[
\sum_{n=0}^{n(r)}2^{n}A_{r}^{\circ}\left(\chi_{E_{n}}\right)\geq\frac{1}{2}
\]
then we necessarily have {some} $n\in\N$, such that $1\leq2^{n}\leq2^{n(r)}$, {for which}
\[
A_{r}^{\circ}\left(\chi_{E_{n}}\right)\geq\frac{2^{\gamma}-1}{2^{{n+2}}}\left(\frac{2^{n}}{S_{{T}}(r)}\right)^{\gamma}.
\]
Indeed, otherwise we have that
\[
\begin{split}\frac{1}{2} & {{}\leq{}}\sum_{n=0}^{n(r)}2^{n}A_{r}^{\circ}\left(\chi_{E_{n}}\right)\le\frac{\left(2^{\gamma}-1\right)}{{4}S_{T}(r)^{\gamma}}\sum_{n=0}^{n(r)}2^{\gamma n}\\
 & \leq\frac{\left(2^{\gamma}-1\right)}{{4}S_{T}(r)^{\gamma}}\frac{(2^{\gamma{(}n(r){+1)}}-1)}{\left(2^{\gamma}-1\right)}\leq\frac{{2^\gamma}S_{T}(r)^{\gamma}-1}{{4}S_{T}(r)^{\gamma}}{{}<\frac{2^\gamma}{4}<{}}\frac{1}{2}
\end{split}
\]
which is a contraction. Thus 
\[
w\left(A_{r}^{\circ}f\geq{1}\right)\leq\sum_{n=0}^{n(r)}w(F_{n})+{S_T(r)}M^{\circ}w\left({\left\{f\geq\frac{1}{2}S_T(r)\right\}}\right)
\]
where 
\[
F_{n}=\left\{ A_{r}^{\circ}\left(\chi_{E_{n}}\right)\geq\frac{2^{\gamma}-1}{2^{{n+2}}}\left(\frac{2^{n}}{S_{T}(r)}\right)^{\gamma}\right\} .
\]
Note that $F_{n}$ is finite and observe that since $A_{r}^{\circ}$
is a selfadjoint operator,
\[
\begin{split}w(F_{n})\frac{2^{\gamma}-1}{2^{{n+2}}}\left(\frac{2^{n}}{S_{T}(r)}\right)^{\gamma} & \leq\int_{F_{n}}wA_{r}^{\circ}(\chi_{E_{n}})(y){dy}=\int_{E_{n}}A_{r}^{\circ}(w\chi_{F_{n}})(y){dy}\\
 & =\sum_{x\in E_{n}}A_{r}^{\circ}(w\chi_{F_{n}})=\sum_{x\in E_{n}}\frac{1}{|S(x,r)|}\sum_{\stackrel{y\in F_{n}}{d(x,y)=r}}w(y)\\
 & =\sum_{x\in E_{n}}\frac{w(F_{n}\cap S(x,r))}{|S(x,r)|}\\
 & {{}\le{}}\mathcal{E}_{T}^{w}(s,r,\alpha)w(F_{n})^{\alpha}M_{s}^{\circ}w(E_{n})^{1-\alpha}.
\end{split}
\]
Now we observe that 
\[
\begin{split} w(F_{n})\frac{2^{\gamma}-1}{2^{{n+2}}}&\left(\frac{2^{n}}{S_{T}(r)}\right)^{\gamma}\le\mathcal{E}_{T}^{w}(s,r,\alpha)w(F_{n})^{\alpha}M_{s}^{\circ}w(E_{n})^{1-\alpha}\\
\iff & w(F_{n})^{1-\alpha}\lesssim\frac{1}{2^{\gamma}-1}2^{n-\gamma n}\mathcal{E}_{T}^{w}(s,r,\alpha)S_{T}(r)^{\gamma}M_{s}^{\circ}w(E_{n})^{1-\alpha}\\
{\iff} & w(F_{n})\lesssim\frac{1}{{(2^{\gamma}-1)^{\frac 1{1-\alpha}}}}2^{n\frac{1-\gamma}{1-\alpha}}\mathcal{E}_{T}^{w}(s,r,\alpha)^{\frac{1}{1-\alpha}}S_{T}(r)^{\frac{\gamma}{1-\alpha}}M_{s}^{\circ}w(E_{n}).
\end{split}
\]
If we choose $\gamma=\frac{\alpha}{2}$  then 
\[
w(F_{n})\lesssim c_{\alpha}2^{n\frac{1}{2}\frac{\alpha}{1-\alpha}}\mathcal{E}_{T}^{w}(s,r,\alpha)^{\frac{1}{1-\alpha}}S_{T}(r)^{\frac{1}{2}\frac{\alpha}{1-\alpha}}M_{s}^{\circ}w(E_{n})2^{n}
\]
with $c_{\alpha}\rightarrow\infty$ when $\alpha\rightarrow0$. {This leads to the desired conclusion as in the end of the proof of Lemma~\ref{lem:sumLevels}.}
\end{proof}
Having the above {lemma} at our disposal we are in the position to
settle Theorem \ref{Thm:weak11abs}.
\begin{proof}
Since $M^{\circ}f=\sup_{r\geq0}A_{r}^{\circ}f$, Lemma~\ref{lem:sumLevelsabs}
implies that 
\[
\begin{split} & w\left(M^{\circ}f\geq\lambda\right)\\
 & \leq\sum_{r=0}^{\infty}w\left(A_{r}^{\circ}f\geq\lambda\right)\\
 & \lesssim c_{\alpha}\sum_{r=0}^{\infty}\sum_{n=0}^{n(r)}\mathcal{E}_{T}^{w}(s,r,\alpha)^{\frac{1}{1-\alpha}}S_{T}(r)^{\frac{1}{2}\frac{\alpha}{1-\alpha}}2^{n\frac{1}{2}\frac{\alpha}{1-\alpha}}2^{n}M_{s}^{\circ}w\left(\left\{ |f|\geq2^{n-1}\lambda\right\} \right)\\
 & \lesssim c_{\alpha}\sum_{n=0}^{\infty}\sum_{S_{T}(r)\geq2^{n-1}}^{\infty}\mathcal{E}_{T}^{w}(s,r,\alpha)^{\frac{1}{1-\alpha}}S_{T}(r)^{\frac{1}{2}\frac{\alpha}{1-\alpha}}c_{s}2^{n\frac{1}{2}\frac{\alpha}{1-\alpha}}2^{n}M_{s}^{\circ}w\left(\left\{ |f|\geq2^{n-1}\lambda\right\} \right)\\
 & \lesssim c_{\alpha}\sup_{n\in\mathbb{N}}\left\{ \sum_{S_{T}(r)\geq2^{n-1}}^{\infty}\mathcal{E}_{T}^{w}(s,r,\alpha)^{\frac{1}{1-\alpha}}S_{T}(r)^{\frac{1}{2}\frac{\alpha}{1-\alpha}}c_{s}2^{n\frac{1}{2}\frac{\alpha}{1-\alpha}}\right\} \sum_{x\in T}\sum_{n=0}^{\infty}2^{n}\chi_{\{|f(x)|\geq2^{n-1}\lambda\}}M_{s}^{\circ}w(x)\\
 & \lesssim c_{\alpha}\sup_{n\in\mathbb{N}}\left\{ \sum_{S_{T}(r)\geq2^{n-1}}^{\infty}\mathcal{E}_{T}^{w}(s,r,\alpha)^{\frac{1}{1-\alpha}}S_{T}(r)^{\frac{1}{2}\frac{\alpha}{1-\alpha}}c_{s}2^{n\frac{1}{2}\frac{\alpha}{1-\alpha}}\right\} \sum_{x\in T}\frac{1}{\lambda}|f(x)|M_{s}^{\circ}w(x).
\end{split}
\]
This proves Theorem \ref{Thm:weak11abs}.
\end{proof}
\begin{rem*}
We would like to note that computing $\mathcal{E}_{T}^{w}(s,r,\alpha)$
and $S_{T}(r)$ and choosing a suitable $\alpha$ can be very difficult{. However} in certain cases such as {for} the {infinite rooted} $k$-ary tree $T$ it is possible.
First we recall that as was noted in \cite{STInf}, 
\[
S_{T}(r)\simeq k^{r}.
\]
Now, from Lemma \ref{lem:Borders} it follows that
\[
\begin{split}\sum_{x\in E}\frac{w(F\cap S(x,r))}{|S(x,r)|}\lesssim c_{s}S_{T}(r)^{\left(\frac{s'}{s'+1}-1\right)}w(F_{i})^{\frac{1}{s'+1}}M_{s}^{\circ}w(E_{j})^{\frac{s'}{s'+1}}\end{split}
\]
and this yields
\[
\frac{1}{w(F_{i})^{\frac{1}{s'+1}}M_{s}^{\circ}w(E_{j})^{\frac{s'}{s'+1}}}\sum_{x\in E}\frac{w(F\cap S(x,r))}{|S(x,r)|}\lesssim c_{s}S_{T}(r)^{-\frac{1}{s'+1}}.
\]
Hence, choosing $\alpha=\frac{1}{s'+1}$ and consequently $1-\alpha=\frac{s'}{s'+1}$
we have that 
\[
\mathcal{E}_{T}^{w}(s,r,\alpha)\leq c_{s}k^{-\frac{r}{s'+1}}{.}
\]
Then, since $\frac{\alpha}{1-\alpha}=\frac{1}{s'}${,}
\[
\mathcal{E}_{T}^{w}(s,r,\alpha)^{\frac{1}{1-\alpha}}S_{T}(r)^{\frac{1}{2}\frac{\alpha}{1-\alpha}}2^{n\frac{1}{2}\frac{\alpha}{1-\alpha}}\lesssim c_{s}k^{-\frac{r}{s'}}k^{r\frac{1}{2}\frac{1}{s'}}2^{\frac{n}{2}\frac{1}{s'}}\simeq c_{s}\left(\frac{2^{n}}{k^{r}}\right)^{\frac{1}{2s'}}
\]
and consequently, Theorem \ref{Thm:weak11abs} recovers the estimate
in Theorem \ref{Thm:FS}. Note that, by the definition of $\alpha${, in}
this case $c_{\alpha}$ is actually $\tilde{c}_{s}$ and $\tilde{c}_{s}\rightarrow\infty$
when $s\rightarrow1$ since the latter implies that $\alpha\rightarrow0$.
\end{rem*}
\begin{rem*}
We would like to end this Appendix observing that the definition of
$\mathcal{E}_{T}^{w}(s,r,\alpha)$ could be stated as follows
\[
\mathcal{E}_{T}^{w}(\tilde{M},r,\alpha)=\sup_{\stackrel{{\scriptstyle E,F\subset G}}{|E|,|F|<\infty}}\frac{1}{w(F)^{\alpha}\tilde{M}w(E)^{1-\alpha}}\sum_{x\in E}\frac{w\left(F\cap S(x,r)\right)}{|S(x,r)|}.
\]
where $\tilde{M}$ is some maximal operator such that $M^\circ g\lesssim\tilde{M}g$
for any function $g\in L^{1}(T)$. Then, exactly the same argument
given above allows us to prove the following more general estimate
\[
w\left(\left\{ x\in T\,:\,M^{\circ}f(x)>\lambda\right\} \right)\lesssim\Gamma_{T,w,r,\alpha,\tilde{M}}\frac{1}{\lambda}\int_{T}|f(x)|\tilde{M}w(x)dx.
\]
where
\[
\Gamma_{T,w,r,\alpha,\tilde{M}}=c_{\alpha}\sup_{n\in\mathbb{N}}\left\{ \sum_{S_{T}(r)\geq2^{n-1}}^{\infty}\mathcal{E}_{T}^{w}(\tilde{M},r,\alpha)^{\frac{1}{1-\alpha}}S_{T}(r)^{\frac{1}{2}\frac{\alpha}{1-\alpha}}c_{s}2^{n\frac{1}{2}\frac{\alpha}{1-\alpha}}\right\} 
\]
with $c_{\alpha}\rightarrow\infty$ as $\alpha\rightarrow0$.
\end{rem*}
\begin{acknowledgement*}
The first author would like to thank Javier Soria for sharing with
him some insights about endpoint estimates for the maximal function
on graphs that were one of the motivations to carry out this work.
\end{acknowledgement*}
\bibliographystyle{plain}
\bibliography{refs}

\end{document}